\newtheoremstyle{myremark} % name
    {7pt}                    % Space above
    {7pt}                    % Space below
    {}  	                 % Body font
    {}                           % Indent amount
    {\bf}       	         % Theorem head font
    {.}                          % Punctuation after theorem head
    {.5em}                       % Space after theorem head
    {}  % Theorem head spec (can be left empty, meaning ‘normal’)
\theoremstyle{plain}
\newtheorem{lemma}{Lemma}[section]
\newtheorem{theorem}[lemma]{Theorem}
\newtheorem{definition}[lemma]{Definition}
\newtheorem{corollary}[lemma]{Corollary}
\newtheorem{proposition}[lemma]{Proposition}
\theoremstyle{myremark}
\newtheorem{remark}[lemma]{Remark}
\newtheorem{example}[lemma]{Example}
\newtheorem{notation}[lemma]{Notation}
\newcommand{\zet}{\mathbb{Z}}
\newcommand{\ind}{\mathrm{Ind}}
\newcommand{\htpyequiv}{\simeq}
\newcommand{\conn}{\mathrm{conn}}
\newcommand{\incl}{\hookrightarrow}
\renewcommand{\subset}{\subseteq}
\newcommand{\susp}{\Sigma}
\newcommand{\add}[2]{\mathrm{Add}(#1;#2)}
\newcommand{\del}[2]{\mathrm{Del}(#1;#2)}
\newcommand{\tc}{\overline{C_n^r}}
\newcommand{\oth}{T_{3r+3}}
\begin{document}
\title{Splittings of independence complexes and the powers of cycles}

%================================
\author[Micha{\l} Adamaszek]{Micha{\l} Adamaszek}
\address{Mathematics Institute and DIMAP,
      \newline University of Warwick, Coventry, CV4 7AL, UK}
\email{aszek@mimuw.edu.pl}
\thanks{Research supported by the Centre for Discrete
        Mathematics and its Applications (DIMAP), EPSRC award EP/D063191/1.}
%===============================

\keywords{Independence complex, Homotopy type, Cycle, Graph powers}
\subjclass[2010]{05C69, 55U10}

\begin{abstract}
We use two cofibre sequences to identify some combinatorial situations when the independence complex of a graph splits into a wedge sum of smaller independence complexes. Our main application is to give a recursive relation for the homotopy types of the independence complexes of powers of cycles, which answers an open question of D. Kozlov.
\end{abstract}
\date{\today.}
\maketitle

%========================================================================
\section{Introduction}

For a finite simple graph $G$, its independence complex $\ind(G)$ is the simplicial complex whose vertices are the vertices of $G$ and whose simplices are the independent sets of $G$. It is a very well studied gadget in combinatorial algebraic topology. Here we investigate some combinatorial techniques for the problem of calculating the homotopy type of that complex for a given graph.

We study the complex $\ind(G)$ using the natural inclusions $\ind(G\setminus v)\incl\ind(G)$ and $\ind(G)\incl\ind(G- e)$ for a vertex $v$ and an edge $e$. They fit into two cofibre sequences
\begin{align}
\label{cofib1} \ind(G\setminus N[v])\incl\ind(G\setminus v)\incl\ind(G)\to\susp\,\ind(G\setminus N[v])\to\cdots, \\
\label{cofib2} \susp\,\ind(G\setminus N[e])\incl\ind(G)\incl\ind(G- e)\to\susp^2\,\ind(G\setminus N[e])\to\cdots.
\end{align}
Here $N[v]$ is the \emph{closed neighbourhood} of $v$ (which includes $v$ itself) and $N(v)=N[v]\setminus\{v\}$. For an edge $e=(u,v)$ we set $N[e]=N[u]\cup N[v]$. The notation $\susp\,K$ stands for the unreduced suspension of $K$.

Results based on various special instances of these sequences are scattered around in the literature, eg. \cite{Cso2,DochEng,EH,Eng1,Eng2,Kaw2,Kaw,Mar,Mesh}. For example the \emph{fold lemma} of \cite{Eng1}, which says that if $N(u)\subset N(v)$ then $\ind(G\setminus v)$ and $\ind(G)$ are homotopy equivalent, corresponds to the case where the first space in the cofibre sequence \eqref{cofib1} is contractible. Another interesting situation occurs when the map $\ind(G\setminus N[v])\incl\ind(G\setminus v)$ is null-homotopic, as then the cofibre sequence splits and we have an equivalence $\ind(G)\htpyequiv \ind(G\setminus v)\vee\susp\,\ind(G\setminus N[v])$. This happens, for example, when $N[u]\subset N[v]$ for some vertex $u$, as in \cite{Mar}.

In Section \ref{sect:first} we present a unified approach to results of this kind using \eqref{cofib1} and \eqref{cofib2}. We also identify combinatorial situations in which the two cofibre sequences lead to exact results. Another splitting result of Mayer-Vietoris type is analyzed in Section \ref{sect:mayer}. Section \ref{sect:unification} contains some applications and examples. In particular, we give quick proofs of some results of \cite{Eng1,Kaw,Mar,Thapper}.

We emphasize that the functorial behaviour of the independence complex under vertex removals and (contravariantly) under edge removals is our key technique. In particular, all homotopy equivalences and splittings we derive are natural, that is induced by some morphisms of the underlying graphs.

The main result of this paper comes in the last section where we use the splitting results associated with the sequence \eqref{cofib2} to calculate the homotopy types of independence complexes of a particular family of graphs, namely the powers $C_n^r$ of cycles. Recall that D. Kozlov in \cite{Koz} computed the homotopy types of $\ind(P_n)$ and $\ind(C_n)$, where $P_n$ is the path and $C_n$ is the cycle on $n$ vertices. The answers are determined by the homotopy equivalences
$$\ind(P_n)\htpyequiv \susp\,\ind(P_{n-3}),\quad \ind(C_n)\htpyequiv \susp\,\ind(C_{n-3}).$$
An open question of \cite{Koz} is to find similar statements for the complexes $\ind(P_n^r)$ and $\ind(C_n^r)$, $r\geq 2$. Here $G^r$ denotes the \emph{$r$-th distance power of $G$}, which is the graph with the same vertex set in which two vertices are adjacent if and only if their distance in $G$ is \emph{at most $r$}. Therefore $C_n^r$ is the graph spanned by the vertices of the $n$-gon, with two vertices being adjacent if and only if they are at most $r$ steps away along the perimeter of the $n$-gon. For $P_n^r$ the $n$-gon is replaced with an $n$-vertex path.

The answer for $\ind(P_n^r)$ is given in \cite{Eng1} in the form of a recursive relation
\footnote{Note that \cite{Koz,Eng1} denote our $\ind(P_n^r)$, $\ind(C_n^r)$ by, respectively, $\mathcal{L}_n^{r+1}$, $\mathcal{C}_n^{r+1}$.}
\begin{equation}
\label{engstrom-path}
\ind(P_n^r)\htpyequiv\susp\,\ind(P_{n-(r+2)}^r)\vee\susp\,\ind(P_{n-(r+3)}^r)\vee\cdots\vee\susp\,\ind(P_{n-(2r+1)}^r),\quad n\geq r+1.
\end{equation}
Here we obtain a corresponding statement for $\ind(C_n^r)$, answering the question raised in \cite{Koz,Eng1}.
\begin{theorem}
\label{thm:bigsplitting}
For every $r\geq 1$ and $n\geq 5r+4$ there is a homotopy equivalence
$$\ind(C_n^r)\htpyequiv\susp^2\,\ind(C_{n-(3r+3)}^r)\vee X_{n,r}$$
where $X_{n,r}$ is a space which splits, up to homotopy, into a wedge sum of complexes of the form $\susp^3\,\ind(P_{n-a}^r)$ for various values of $4r+6\leq a\leq 6r+3$.
\end{theorem}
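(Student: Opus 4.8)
The plan is to dismantle $\ind(C_n^r)$ by repeated use of the edge cofibre sequence \eqref{cofib2} together with the splitting criteria of Section~\ref{sect:first}, recording a wedge summand at each step, and then to re-assemble what survives as a doubly suspended smaller cycle power. Recall that whenever $e$ is an edge of a graph $G$ for which the hypothesis of the relevant lemma of Section~\ref{sect:first} holds, the sequence \eqref{cofib2} splits and yields
\[
  \ind(G)\htpyequiv\ind(G-e)\vee\susp\,\ind(G\setminus N[e]).
\]
I would apply this successively to edges lying in a short arc of the $n$-gon, so as to break the cyclic structure there in a controlled way.

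Concretely, fix a point of the cycle and let $e_1,e_2,\dots$ enumerate the edges of $C_n^r$ that ``cross'' a chosen arc of $3r+3$ consecutive vertices, ordered so that, with $G_1=C_n^r$ and $G_{i+1}=G_i-e_i$, the splitting hypothesis for \eqref{cofib2} is satisfied by $(G_i,e_i)$ at every stage. The first task is to verify this uniformly: one must check the appropriate closed-neighbourhood containment (or contractible-link condition) inside each $G_i$, which is feasible because each $G_i$ differs from $C_n^r$ only inside a window of size $O(r)$ around the chosen arc, and the bound $n\ge 5r+4$ guarantees that this window is a proper part of the cycle with untouched vertices on either side. Because of the same bound, each $G_i\setminus N[e_i]$ is an induced subgraph of $C_n^r$ on the complement of an interval of length at least $r$, hence --- after removing with the fold lemma of \cite{Eng1} any pendant vertices that earlier deletions may have created --- an honest path power $P_{n-a}^r$; thus every split peels off a copy of $\susp\,\ind(P_{n-a}^r)$. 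Feeding these path-power summands (and the extra ones produced when the residual graph $H$ below is analysed) into the Engström recursion \eqref{engstrom-path}, applied once or twice so as to absorb the suspensions accumulated along the way, rewrites all of them as $\susp^3\,\ind(P_{n-a'}^r)$ with $a'$ forced into the window $4r+6\le a'\le 6r+3$; their wedge is $X_{n,r}$. (For $r=1$ this window is empty, so $X_{n,1}$ is contractible, in agreement with $\ind(C_n)\htpyequiv\susp^2\ind(C_{n-6})$, i.e.\ with applying Kozlov's relation twice.)

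After all the $e_i$ have been removed, the cyclic structure near the chosen arc has been broken, and it remains to identify $\ind$ of the residual graph $H$ with $\susp^2\,\ind(C_{n-(3r+3)}^r)$. This is the one place where the \emph{smaller cycle power} genuinely enters, and it is the crux: no subgraph of $C_n^r$ is isomorphic to $C_{n-(3r+3)}^r$ (a counting of the gaps between any $n-(3r+3)$ chosen vertices shows the required short-distance conditions cannot all hold), so this equivalence cannot be an isomorphism and must be produced homotopically. I would obtain it by collapsing $H$ --- via further applications of the fold lemma and, where available, of \eqref{cofib1} with vertices whose removed neighbourhood has contractible (path-power) independence complex --- onto a graph with the same independence complex as the disjoint union of $C_{n-(3r+3)}^r$ with two edges, whose complex is exactly $\susp^2\,\ind(C_{n-(3r+3)}^r)$; equivalently, one runs the very same edge-deletion analysis on $C_{n-(3r+3)}^r$ and matches the two computations term by term, the shift $3r+3$ being precisely what makes the ``bulk'' parts agree, or one routes this identification through the Mayer--Vietoris splitting of Section~\ref{sect:mayer}. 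All maps in sight are induced by morphisms of graphs, so the surviving wedge summand is $\susp^2\,\ind(C_{n-(3r+3)}^r)$ on the nose and not merely up to homology. I expect the main obstacle to be exactly this last identification, closely followed by the uniform verification of the splitting hypothesis of \eqref{cofib2} at each deletion; once those are in place, the rest --- tracking which path powers occur in which suspension degree and checking the arithmetic that confines the exponents to $[4r+6,6r+3]$ --- is careful but routine.
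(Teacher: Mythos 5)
There are two related gaps, and together they undermine the proposal. First, the splitting you cite from \eqref{cofib2} is stated backwards: Proposition \ref{prop:les-e}.b) gives $\ind(G- e)\htpyequiv \ind(G)\vee\susp^2\,\ind(G\setminus N[e])$, i.e.\ it is the complex of the graph \emph{without} the edge that splits off the complex of the graph \emph{with} the edge (and with a double, not single, suspension). Consequently, deleting edges $e_1,e_2,\dots$ from $G_1=C_n^r$ as you propose only produces statements of the form $\ind(G_{i+1})\htpyequiv\ind(G_i)\vee\cdots$, which chain up to an expression for $\ind$ of the final edge-deleted graph in terms of $\ind(C_n^r)$ --- the wrong way around for computing $\ind(C_n^r)$. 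To peel wedge summands off $\ind(C_n^r)$ itself via \eqref{cofib2} one must realize $C_n^r$ as the graph ``$G- e$'', i.e.\ one must \emph{add} edges to $C_n^r$; this is exactly how Lemma \ref{lem:inductive} is used in the paper, with each addition certified by the $P_4$-criterion of Theorem \ref{p4}.

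Second, and more seriously, your plan for the residual graph has no workable mechanism. You correctly note that $C_{n-(3r+3)}^r$ is not a subgraph of $C_n^r$, but the remedy you sketch --- fold-lemma collapses and vertex removals applied to a graph obtained from $C_n^r$ by \emph{deleting} edges --- can never create the wrap-around edges needed to close a path power into the smaller cycle power, so there is no way to reach ``the disjoint union of $C_{n-(3r+3)}^r$ with two edges'' from inside $C_n^r$ by such moves. The missing idea is the auxiliary graph $\tc\supset C_n^r$ of Section \ref{sect:allpowers}: the second-phase edges $(-x,3r+3+y)$ are added precisely so that the vertices $\{3r+4,\dots,0\}$ induce a genuine copy of $C_{n-(3r+3)}^r$ inside $\tc$, while the arc gadget $\oth$ on $\{1,\dots,3r+3\}$ has $\ind(\oth)\htpyequiv S^1$ and can be disconnected from the rest by isolating-edge deletions, giving $\ind(\tc)\htpyequiv S^1\ast\ind(C_{n-(3r+3)}^r)=\susp^2\,\ind(C_{n-(3r+3)}^r)$ (Proposition \ref{prop:model}). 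The theorem then follows because each edge of $\tc\setminus C_n^r$, inserted in the right order, satisfies the hypothesis of Proposition \ref{prop:les-e}.b) (Proposition \ref{prop:edgebyedge}), and the graphs $G_i\setminus N[e_i]$ are path powers (possibly joined with small cliques or paths), which after Corollary \ref{clique-neib} and \eqref{engstrom-path} yield the $\susp^3\,\ind(P_{n-a}^r)$ summands. Without the edge-addition construction, neither the direction of the splitting nor the appearance of the smaller cycle power can be obtained, so the proposal as it stands does not prove the theorem.
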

The reader will see that the proof of the theorem gives an algorithmic way of enumerating all the wedge summands that go into $X_{n,r}$; there are asymptotically $r^3$ of them and we list them at the end of Section \ref{sect:allpowers}. For example, when $r=1$ we will have $\ind(C_n)\htpyequiv \susp^2\,\ind(C_{n-6})$ with $X_{n,1}$ being trivial, which agrees with Kozlov's recurrence. When $r=2$ the exact answer is
$$\ind(C_n^2)\htpyequiv \susp^2\,\ind(C_{n-9}^2)\,\vee\,\bigvee^4\susp^3\,\ind(P_{n-14}^2)\,\vee\,\bigvee^5\susp^3\,\ind(P_{n-15}^2),$$
and so on. 

The proof of Theorem \ref{thm:bigsplitting} can be found in Section \ref{sect:allpowers}. The idea is to find an explicit inclusion $\susp^2\,\ind(C_{n-(3r+3)}^r)\incl\ind(C_n^r)$ which splits off. It can also be seen as producing a quite unusual model of the space $\susp^2\,\ind(C_{n-(3r+3)}^r)$.

%%%%%%%%%%%%%%%%%%%%%%%%%%%%%%%%%%%%%%%%%%%%%%%%%%55%%%%%%%%%%%%%%%%%%%%%%%%%%%%%%%%%%%%%%%%%%%%%%%%%%
\section{Notation}
\label{sect:notation}

We first recall some notation. For a graph $G$ and subset $W\subset V(G)$ of the vertices let $G[W]$ denote the subgraph of $G$ induced by $W$ and let $\ind(G)[W]$ be the subcomplex of $\ind(G)$ induced by the vertex set $W$. We easily see
$$\ind(G)[W]=\ind(G[W])$$
and it follows that
$$\ind(G)[W]\cap \ind(G)[U] = \ind(G[W\cap U])$$
for any two vertex sets $W,U\subset V(G)$.
We are going to write $G\setminus v$ and $G\setminus W$ instead of the more correct $G[V(G)\setminus\{v\}]$ and $G[V(G)\setminus W]$. The notation $G- e$ or $G\cup e$ means $G$ with the edge $e$ removed or added. To avoid overloading curly brackets $\{\cdot\}$, edges will be denoted by $e=(u,v)$, which should not suggest that they are directed. By $N_G(u)$ and $N_G[u]$ we mean the open and closed neighbourhood of $u$ in $G$ and we write $N[u]$ and $N(u)$ when there is no danger of ambiguity. If $e=(u,v)$ is an edge in $G$ we define the \emph{closed neighbourhood} of $e$ as $N[e]=N[u]\cup N[v]$.

The symbols $P_n$, $C_n$ and $K_n$ denote the path, cycle and complete graph with $n$ vertices. They are understood to be the empty graph when $n\leq 0$. 

If $G\sqcup H$ is the disjoint union of two graphs then its independence complex satisfies
$$\ind(G\sqcup H)=\ind(G)\ast\ind(H)$$
where $\ast$ is the simplicial join. In particular, if $\ind(G)$ is contractible then so is $\ind(G\sqcup H)$ for any $H$. If $e$ is understood as the graph consisting of a single edge then $\ind(e)=S^0$ and $\ind(e\sqcup G)=S^0\ast \ind(G)=\susp\,\ind(G)$ is the suspension of $\ind(G)$. 

Many results can be nicely phrased in the language of cofibre sequences. For any continuous map $f:A\to X$ the \emph{homotopy cofibre} (or mapping cone) is the space
$$C(f)=(X\sqcup (A\times [0,1]))/f(a)\sim (a,1),\ (a,0)\sim(a',0).$$
If $f:A\incl X$ is a subcomplex inclusion then $C(f)$ is just $X$ with a cone over $A$ attached and it is homotopy equivalent to $X/A$. There is a cofibre (or Puppe) sequence
$$A\xrightarrow{f} X\incl C(f) \to \susp\,A\xrightarrow{\susp f} \susp\,X\to\susp\,C(f)\to\susp^2\,A\to\cdots$$
with the property that every consecutive triple is, up to homotopy, a map followed by its mapping cone. Since the homotopy type of $C(f)$ depends only on the homotopy class of $f$, we get that if $f:A\to X$ is null-homotopic then $C(f)\htpyequiv X\vee\susp\,A$. In particular, if $A$ is contractible then $C(f)\htpyequiv X$.

We refer to \cite{Book} for facts about (combinatorial) algebraic topology.

%%%%%%%%%%%%%%%%%%%%%%%%%%%%%%%%%%%%%%%%%%%%%%%%%%55%%%%%%%%%%%%%%%%%%%%%%%%%%%%%%%%%%%%%%%%%%%%%%%%%%
\section{Two cofibre sequences and their consequences}
\label{sect:first}

We start with vertex removals. Various parts of the next proposition are well-known.
\begin{proposition}
\label{prop:les-v}
There is always a cofibre sequence
$$\ind(G\setminus N[v])\incl\ind(G\setminus v)\incl\ind(G)\to\susp\,\ind(G\setminus N[v])\to\cdots.$$
In particular
\begin{itemize}
\item[a)] if $\ind(G\setminus N[v])$ is contractible then the natural inclusion $\ind(G\setminus v)\incl\ind(G)$ is a homotopy equivalence,
\item[b)] if the map $\ind(G\setminus N[v])\incl\ind(G\setminus v)$ is null-homotopic then there is a splitting
$$\ind(G)\htpyequiv \ind(G\setminus v) \vee \susp\,\ind(G\setminus N[v]).$$
\end{itemize}
\end{proposition}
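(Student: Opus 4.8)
The plan is to build the cofibre sequence by hand from the pushout structure of $\ind(G)$, and then deduce parts (a) and (b) as formal consequences. First I would observe that $\ind(G)$ decomposes as a union of two subcomplexes along the vertex $v$: namely $\ind(G) = \ind(G\setminus v)\cup \mathrm{st}(v)$, where $\mathrm{st}(v)$ is the star of $v$ in $\ind(G)$, i.e.\ the set of independent sets $S$ with $S\cup\{v\}$ still independent. The star is a cone with apex $v$, hence contractible. The intersection of the two pieces is exactly the link of $v$, which consists of those independent sets disjoint from $N[v]$, i.e.\ $\mathrm{st}(v)\cap\ind(G\setminus v)=\ind(G\setminus N[v])$. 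So $\ind(G)$ is obtained from $\ind(G\setminus v)$ by attaching the cone $\mathrm{st}(v)$ along the subcomplex $\ind(G\setminus N[v])$; up to homotopy this is the mapping cone of the inclusion $\ind(G\setminus N[v])\incl\ind(G\setminus v)$. Feeding this into the Puppe sequence recalled in Section \ref{sect:notation} gives precisely
$$\ind(G\setminus N[v])\incl\ind(G\setminus v)\incl\ind(G)\to\susp\,\ind(G\setminus N[v])\to\cdots,$$
and one checks the first two maps are the natural inclusions.

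For part (a): if $\ind(G\setminus N[v])$ is contractible, then its mapping cone into $\ind(G\setminus v)$ is homotopy equivalent to $\ind(G\setminus v)$ itself (a cone on a contractible space attached changes nothing up to homotopy), and since that mapping cone is $\ind(G)$ the inclusion $\ind(G\setminus v)\incl\ind(G)$ is an equivalence — exactly the statement that $C(f)\htpyequiv X$ when $A$ is contractible, as noted in the excerpt. For part (b): if the inclusion $f\colon\ind(G\setminus N[v])\incl\ind(G\setminus v)$ is null-homotopic, then its mapping cone is $\ind(G\setminus v)\vee\susp\,\ind(G\setminus N[v])$ by the general fact $C(f)\htpyequiv X\vee\susp\,A$ for null-homotopic $f$; identifying the cone with $\ind(G)$ gives the claimed splitting.

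The only genuinely substantive point is the identification of $\ind(G)$ with the mapping cone of the inclusion, i.e.\ verifying that $\ind(G)=\ind(G\setminus v)\cup\mathrm{st}(v)$ with the stated intersection and that gluing a cone along a subcomplex yields the mapping cone up to homotopy. This is the standard pushout/excision argument for simplicial complexes and is routine, but it is where the real content lies; once it is in place, (a) and (b) are immediate from the formal properties of cofibre sequences stated in Section \ref{sect:notation}. I would also remark that, since all maps involved are induced by the vertex inclusion $G\setminus v\incl G$, the resulting equivalences and splittings are natural, in keeping with the functoriality emphasized in the introduction.
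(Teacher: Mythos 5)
Your argument is correct and is essentially the paper's own proof: your star/link decomposition $\ind(G)=\ind(G\setminus v)\cup\mathrm{st}(v)$ with $\mathrm{st}(v)=v\ast\ind(G\setminus N[v])$ contractible and intersection $\ind(G\setminus N[v])$ is exactly the decomposition used there, after which $\ind(G)$ is recognized as the homotopy cofibre of $\ind(G\setminus N[v])\incl\ind(G\setminus v)$ and parts a) and b) follow from the formal properties of cofibre sequences recalled in Section \ref{sect:notation}.
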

\begin{proof}
Any independent set in $G$ is either contained in $G\setminus v$ or it is the union of $\{v\}$ and some independent set in $G\setminus N[v]$, so we have a decomposition
$$\ind(G)=S\cup T$$
where
$$S=\ind(G\setminus v),\quad T=v\ast\ind(G\setminus N[v])\htpyequiv\ast, \quad S\cap T=\ind(G\setminus N[v]).$$
Therefore $\ind(G)$ is the homotopy cofibre of the inclusion $S\cap T\incl S$. The statements a) and b) follow from the properties discussed in Section \ref{sect:notation}.
\end{proof}
The ``generic combinatorial cases'' of a) and b) are the following.
\begin{theorem}[\cite{Eng1}]
\label{smallthm}
If $u,v$ are two distinct vertices with $N(u)\subset N(v)$ then there is a homotopy equivalence
$$\ind(G)\htpyequiv\ind(G\setminus v).$$
\end{theorem}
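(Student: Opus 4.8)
The plan is to deduce this from part (a) of Proposition~\ref{prop:les-v} applied to the vertex $v$: it is enough to show that $\ind(G\setminus N[v])$ is contractible, since then the natural inclusion $\ind(G\setminus v)\incl\ind(G)$ is a homotopy equivalence.

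First I would check that $u$ and $v$ are non-adjacent. Indeed, if they were adjacent then $v\in N(u)\subset N(v)$, which is impossible since $v\notin N(v)$. Hence $u\notin N[v]$, so $u$ is a vertex of the graph $G\setminus N[v]$. Next I would compute its neighbourhood there: it is $N_G(u)\setminus N[v]$, and because $N_G(u)\subset N_G(v)\subset N[v]$ this set is empty. So $u$ is an isolated vertex of $G\setminus N[v]$, and consequently $\ind(G\setminus N[v])=u\ast\ind\bigl((G\setminus N[v])\setminus u\bigr)$ is a cone with apex $u$, hence contractible. (Equivalently, $\{u\}$ extends every independent set of $G\setminus N[v]$.)

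With that in hand, Proposition~\ref{prop:les-v}(a) immediately gives $\ind(G)\htpyequiv\ind(G\setminus v)$, via the natural inclusion, which is what we want. There is no real obstacle here: the only subtlety is the degenerate configuration in which $u$ sits inside $N[v]$, and the first step rules this out; after that the argument reduces to the one-line observation that $u$ loses all its neighbours in $G\setminus N[v]$.
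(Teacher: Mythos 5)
Your argument is correct and is essentially the paper's own proof: the paper likewise observes that $N(u)\subset N(v)$ forces $u$ to be an isolated vertex of $G\setminus N[v]$, so $\ind(G\setminus N[v])$ is a cone (hence contractible), and then applies Proposition~\ref{prop:les-v}(a). Your explicit verification that $u\notin N[v]$ is a point the paper only notes in passing, but it is the same route.
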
 
\begin{theorem}[\cite{Mar}]
\label{closedneib}
If $u,v$ are two distinct vertices with $N[u]\subset N[v]$ then there is a homotopy equivalence
$$\ind(G)\htpyequiv \ind(G\setminus v) \vee \susp\,\ind(G\setminus N[v]).$$
\end{theorem}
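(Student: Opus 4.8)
The plan is to deduce this from part b) of Proposition~\ref{prop:les-v} applied to the vertex $v$: it suffices to prove that the natural inclusion $\ind(G\setminus N[v])\incl\ind(G\setminus v)$ is null-homotopic. I would establish this by exhibiting a contractible subcomplex of $\ind(G\setminus v)$ through which that inclusion factors.

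First I would record the elementary consequences of the hypothesis. Since $u\neq v$ and $N[u]\subset N[v]$, the vertex $u$ lies in $N[v]$; hence $u$ is adjacent to $v$ and, in particular, $u\notin V(G\setminus N[v])$. Moreover, for any $w\in N(u)$ with $w\neq v$ we have $w\in N[u]\subset N[v]$ and $w\neq v$, so $w\in N(v)$; thus $N(u)\setminus\{v\}\subset N(v)$.

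The key combinatorial claim is then: for every independent set $\sigma$ of $G\setminus N[v]$ the set $\sigma\cup\{u\}$ is independent in $G\setminus v$. Indeed $v\notin\sigma\cup\{u\}$ since $v\in N[v]$ and $u\neq v$, so it is enough to check independence in $G$; no vertex of $\sigma$ lies in $N[v]$, hence none lies in $N(v)\supset N(u)\setminus\{v\}$, so $u$ is non-adjacent to every vertex of $\sigma$. Since also $\sigma$ is itself independent in $G\setminus v$ (because $G\setminus N[v]$ is an induced subgraph of $G\setminus v$, so $\ind(G\setminus N[v])$ is a subcomplex of $\ind(G\setminus v)$ by the remark of Section~\ref{sect:notation}), this says precisely that the join $u\ast\ind(G\setminus N[v])$ — which is well defined as $u\notin V(G\setminus N[v])$ — is a subcomplex of $\ind(G\setminus v)$. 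We thus obtain a factorisation
$$\ind(G\setminus N[v])\incl u\ast\ind(G\setminus N[v])\incl\ind(G\setminus v)$$
of the inclusion appearing in Proposition~\ref{prop:les-v}. The middle space is a cone, hence contractible, so the composite is null-homotopic, and part b) of that proposition gives the desired splitting $\ind(G)\htpyequiv\ind(G\setminus v)\vee\susp\,\ind(G\setminus N[v])$.

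There is no genuinely hard step here; the only point requiring care is the combinatorial bookkeeping of the third paragraph, namely verifying that coning off with $u$ keeps us inside $\ind(G\setminus v)$. I would also remark that the resulting equivalence is natural in the sense emphasized in the introduction — the null-homotopy is induced by a graph-level map collapsing $u$ — although this is not needed for the statement.
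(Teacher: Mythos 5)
Your proposal is correct and is essentially the paper's own argument: the paper also deduces Theorem~\ref{closedneib} from Proposition~\ref{prop:les-v}.b) by observing that the inclusion $\ind(G\setminus N[v])\incl\ind(G\setminus v)$ factors through the contractible cone $u\ast\ind(G\setminus N[v])$. Your extra combinatorial verification that this cone is indeed a subcomplex of $\ind(G\setminus v)$ is exactly the point the paper leaves implicit.
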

\begin{proof}[Proof of Theorems \ref{smallthm} and \ref{closedneib}.]
If $u$ is such that $N(u)\subset N(v)$ then the graph $G\setminus N[v]$ has $u$ as an isolated vertex, hence the complex $\ind(G\setminus N[v])$ is contractible and Theorem \ref{smallthm} follows from part a) above. If, on the other hand, $u$ is such that $N[u]\subset N[v]$ then the inclusion $\ind(G\setminus N[v])\incl\ind(G\setminus v)$ factors through the contractible space $u\ast\ind(G\setminus N[v])$, so Theorem \ref{closedneib} follows from part b).
\end{proof}
Note that the condition $N(u)\subset N(v)$ implies that $u$ and $v$ are not adjacent in $G$, while $N[u]\subset N[v]$ forces them to be adjacent.

A similar discussion applies to edges. If $e=(u,v)$ is an edge then $e\sqcup(G\setminus N[e])$ is the induced subgraph of $G$ whose vertices are $u$, $v$ and all the vertices of $G\setminus N[e]$. Then we have the following proposition.
\begin{proposition}
\label{prop:les-e}
There is always a cofibre sequence
$$\ind(e\sqcup(G\setminus N[e]))\incl\ind(G)\incl\ind(G- e)\to\susp\,\ind(e\sqcup(G\setminus N[e]))\to\cdots.$$
In particular
\begin{itemize}
\item[a)] if $\ind(G\setminus N[e])$ is contractible then the natural inclusion $\ind(G)\incl\ind(G- e)$ is a homotopy equivalence,
\item[b)] if the map $\ind(e\sqcup(G\setminus N[e]))\incl\ind(G)$ is null-homotopic then there is a splitting
$$\ind(G- e)\htpyequiv \ind(G) \vee \susp^2\,\ind(G\setminus N[e]).$$
\end{itemize}
\end{proposition}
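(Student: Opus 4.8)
The plan is to mimic the structure of the proof of Proposition~\ref{prop:les-v}, swapping the roles of vertex deletion and edge deletion and paying attention to the fact that removing an edge \emph{enlarges} the independence complex. First I would decompose $\ind(G-e)$ as a union of two subcomplexes. Writing $e=(u,v)$, the independent sets of $G-e$ are of two kinds: those that are already independent in $G$, and those that contain both $u$ and $v$ (and hence, being independent in $G-e$, avoid $N_G[e]\setminus\{u,v\}$). This gives
\begin{equation*}
\ind(G-e)=S\cup T,\quad S=\ind(G),\quad T=\{u,v\}\ast\ind(G\setminus N[e]),\quad S\cap T=\ind(e\sqcup(G\setminus N[e])).
\end{equation*}
Indeed $S\cap T$ consists of the independent sets of $G-e$ lying in $S$ that contain at most one of $u,v$, which are exactly the independent sets of the induced subgraph on $\{u,v\}\cup(V(G)\setminus N[e])$, where $u$ and $v$ form an edge; so $S\cap T=\ind(e\sqcup(G\setminus N[e]))$, consistent with the identity $\ind(G)[W]\cap\ind(G)[U]=\ind(G[W\cap U])$ recalled in Section~\ref{sect:notation} (taking the ambient graph to be $G-e$).

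The key observation is that $T=\{u,v\}\ast\ind(G\setminus N[e])$ is a cone (the join of a point—or rather of the contractible full simplex on $\{u,v\}$—with anything is contractible), hence $T$ is contractible. Therefore $\ind(G-e)$ is the homotopy cofibre of the inclusion $S\cap T\incl S$, i.e. of
\begin{equation*}
\ind(e\sqcup(G\setminus N[e]))\incl\ind(G).
\end{equation*}
Now I invoke the identity $\ind(e\sqcup H)=\susp\,\ind(H)$ from Section~\ref{sect:notation} with $H=G\setminus N[e]$, which rewrites the first space as $\susp\,\ind(G\setminus N[e])$. Splicing the two terms $S\cap T\incl S$ into the Puppe sequence of this inclusion gives exactly
\begin{equation*}
\susp\,\ind(G\setminus N[e])\incl\ind(G)\incl\ind(G-e)\to\susp^2\,\ind(G\setminus N[e])\to\cdots,
\end{equation*}
which is the asserted cofibre sequence. (I should double-check that the cofibre sequence is written in terms of $\susp\,\ind(G\setminus N[e])$ while the splitting decomposition naturally produces $\ind(e\sqcup(G\setminus N[e]))$; these agree by the join identity, and I would state it that way to keep the notation uniform with the statement.) Part~a) follows because if $\ind(G\setminus N[e])$ is contractible then so is $\ind(e\sqcup(G\setminus N[e]))=\susp\,\ind(G\setminus N[e])$, so the first space in the sequence is contractible and the next inclusion is an equivalence. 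Part~b) follows because if $\ind(e\sqcup(G\setminus N[e]))\incl\ind(G)$ is null-homotopic then the cofibre sequence splits, giving $\ind(G-e)\htpyequiv\ind(G)\vee\susp\bigl(\ind(e\sqcup(G\setminus N[e]))\bigr)=\ind(G)\vee\susp^2\,\ind(G\setminus N[e])$.

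The only genuine subtlety—what I would flag as the main point to get right rather than an obstacle—is the combinatorial verification that $S\cap T$ is precisely $\ind(e\sqcup(G\setminus N[e]))$ and not something larger: one must check that an independent set of $G-e$ which is also independent in $G$ and uses at most one endpoint of $e$ cannot touch $N_G(u)\cup N_G(v)$ beyond $\{u,v\}$ in any forbidden way—but this is immediate, since independence in $G$ already forbids every $G$-edge, so the induced subgraph seen by $S\cap T$ is genuinely $G$ restricted to $\{u,v\}\cup(V\setminus N[e])$, in which $u,v$ are only adjacent to each other. Everything else is a direct application of the mapping-cone formalism recalled in Section~\ref{sect:notation}, exactly parallel to Proposition~\ref{prop:les-v}, and the naturality claim from the introduction holds because all the maps involved are induced by the inclusion $G-e\to G$ (contravariantly) and by vertex-set inclusions.
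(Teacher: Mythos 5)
Your argument is the paper's own: the decomposition $\ind(G-e)=K\cup L$ with $K=\ind(G)$ and $L=e\ast\ind(G\setminus N[e])$ contractible (a cone over the full simplex on $\{u,v\}$), the identification $K\cap L=\ind(e\sqcup(G\setminus N[e]))=\susp\,\ind(G\setminus N[e])$, and then the Puppe sequence of the inclusion $K\cap L\incl K$, from which a) and b) are read off exactly as you do. So in structure and substance the proposal matches the paper's proof.

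One caveat: the sentence you yourself flag as ``the main point to get right'' is justified incorrectly, even though the claim it supports is true. You describe $S\cap T$ as ``the independent sets of $G-e$ lying in $S$ that contain at most one of $u,v$'' and later assert that such sets ``cannot touch $N_G(u)\cup N_G(v)$ beyond $\{u,v\}$''. Since $(u,v)$ is an edge of $G$, \emph{every} face of $S=\ind(G)$ contains at most one of $u,v$, so that characterization describes all of $S$; moreover a $G$-independent set can certainly meet $N(u)\cup N(v)\setminus\{u,v\}$ (a single vertex of $N(u)\setminus\{u,v\}$ already does) --- such a set is simply not a face of $T$. The correct one-line verification goes through membership in $T$: a face of $T$ is by definition supported on $\{u,v\}\cup(V(G)\setminus N[e])$ with its part outside $\{u,v\}$ independent in $G\setminus N[e]$, and such a face is independent in $G$ if and only if it omits one of $u,v$, because there are no $G$-edges between $\{u,v\}$ and $V(G)\setminus N[e]$; conversely every face of $\ind(e\sqcup(G\setminus N[e]))$ lies in both $S$ and $T$. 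With that sentence repaired, the proof is complete and coincides with the paper's.
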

\begin{proof}
The first statement is an observation of \cite{Mesh}: any independent set in $G- e$ is either independent in $G$ or it contains both endpoints of $e$ together with some independent set in $G\setminus N[e]$. This gives a decomposition
$$\ind(G- e)=K\cup L$$
where
$$K=\ind(G),\quad L=e\ast\ind(G\setminus N[e])\htpyequiv\ast, \quad K\cap L=\ind(e\sqcup(G\setminus N[e])).$$
Again, it means that $\ind(G- e)$ is homotopy equivalent to the homotopy cofibre of the inclusion $K\cap L\incl K$. The statements a) and b) follow from the properties discussed in Section \ref{sect:notation} and the fact that $\ind(e\sqcup(G\setminus N[e]))=\susp\,\ind(G\setminus N[e])$.
\end{proof}

As before there are some useful special circumstances when conditions a) and b) can be verified at the combinatorial level.
\begin{definition}
\label{def:isolating}
An edge $e=(u,v)$ in $G$ is called \emph{isolating} if the induced subgraph $G\setminus N[e]$ has an isolated vertex.
\end{definition}
Clearly part a) holds for isolating edges, i.e. the removal of an isolating edge does not change the homotopy type of the independence complex. Note that any such statement can also be used in the opposite direction, that is to say that the insertion of an edge which becomes isolating preserves the homotopy type. 

The situations where part b) of Proposition \ref{prop:les-e} applies are more complicated.
\begin{theorem}
\label{general-p4}
Let $e=(u,v)$ be an edge in $G$. Suppose $T\subset G$ is an induced subgraph which contains the edge $e$ and such that $\ind(T)$ is contractible and, moreover, for every $x\in T$ we have $N[x]\subset N[e]$. Then the inclusion $\ind(e\sqcup(G\setminus N[e]))\incl\ind(G)$ is null-homotopic. Consequently, there is a splitting
$$\ind(G- e)\htpyequiv \ind(G) \vee \susp^2\,\ind(G\setminus N[e]).$$
\end{theorem}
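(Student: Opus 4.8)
The plan is to verify hypothesis b) of Proposition~\ref{prop:les-e} by factoring the inclusion $\ind(e\sqcup(G\setminus N[e]))\incl\ind(G)$ through a contractible induced subcomplex of $\ind(G)$.

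The natural candidate for that subcomplex is $\ind(G)[W]=\ind(G[W])$, where $W=V(T)\cup(V(G)\setminus N[e])$. First I would check that $G[W]=T\sqcup(G\setminus N[e])$. The two vertex sets are disjoint, since any $x\in V(T)$ satisfies $x\in N[x]\subset N[e]$ and hence $x\notin V(G)\setminus N[e]$. There are also no edges of $G$ between them: if $x\in V(T)$, $y\in V(G)\setminus N[e]$ and $(x,y)$ were an edge, then $y\in N(x)\subset N[x]\subset N[e]$, a contradiction. Since $T$ is an induced subgraph and $G\setminus N[e]$ is induced by definition, it follows that $G[W]$ is precisely the disjoint union $T\sqcup(G\setminus N[e])$, so that
$$\ind(G[W])=\ind(T)\ast\ind(G\setminus N[e]),$$
which is contractible because $\ind(T)$ is (the join of a contractible complex with any complex is contractible).

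Next I would factor the inclusion. Since $e$ is the subgraph of $T$ on the two adjacent vertices $u,v$, the complex $\ind(e)=\ind(T)[\{u,v\}]$ is a subcomplex of $\ind(T)$, whence
$$\ind(e\sqcup(G\setminus N[e]))=\ind(e)\ast\ind(G\setminus N[e])\subset\ind(T)\ast\ind(G\setminus N[e])=\ind(G[W])\subset\ind(G).$$
The composite of these two inclusions is exactly the map $\ind(e\sqcup(G\setminus N[e]))\incl\ind(G)$ of interest, and it factors through the contractible space $\ind(G[W])$, so it is null-homotopic. The claimed splitting $\ind(G- e)\htpyequiv\ind(G)\vee\susp^2\,\ind(G\setminus N[e])$ is then immediate from Proposition~\ref{prop:les-e}b).

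The argument is essentially formal once the right subcomplex is spotted; the only step needing genuine attention is the identification $G[W]=T\sqcup(G\setminus N[e])$, and in particular the absence of edges between $V(T)$ and $V(G)\setminus N[e]$, which is precisely where the hypothesis $N[x]\subset N[e]$ for all $x\in T$ is used. The contractibility of $\ind(T)$ enters only to guarantee that the intermediate complex $\ind(G[W])$ is contractible, which is what turns the factorisation into a null-homotopy.
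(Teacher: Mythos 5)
Your proof is correct and follows essentially the same route as the paper's: both factor the inclusion $\ind(e\sqcup(G\setminus N[e]))\incl\ind(G)$ through $\ind(G[V(T)\cup(V(G)\setminus N[e])])$, identify that graph as $T\sqcup(G\setminus N[e])$ using the hypothesis $N[x]\subset N[e]$, and conclude contractibility of the intermediate complex from $\ind(T)\htpyequiv\ast$ via the join formula. Your write-up just spells out the disjointness and no-edge verifications that the paper leaves implicit.
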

\begin{proof}
The inclusion $\ind(e\sqcup(G\setminus N[e]))\incl\ind(G)$ factors through $\ind(G[V(T)\cup(V(G)\setminus N[e])])$. Since neither of the vertices $x\in V(T)$ has an edge to $V(G)\setminus N[e]$, the last graph is in fact $T\sqcup (G\setminus N[e])$, so its independence complex is a join where one of the factors is $\ind(T)\htpyequiv\ast$. It means that our inclusion factors through a contractible space.
\end{proof}
The simplest graph which can play the role of $T$ in the last statement is the $4$-vertex path $P_4$, hence we have the next corollary, which will be one of the main tools in Section \ref{sect:allpowers}. 
\begin{theorem}
\label{p4}
Let $e=(u,v)$ be an edge in $G$. Suppose there are vertices $x,y\in N[e]$ such that $N[x]\cup N[y]\subset N[e]$ and the induced subgraph $G[x,y,u,v]$ is isomorphic to the $4$-vertex path $P_4$. Then the inclusion $\ind(e\sqcup(G\setminus N[e]))\incl\ind(G)$ is null-homotopic. 
Consequently, there is a splitting
$$\ind(G- e)\htpyequiv \ind(G) \vee \susp^2\,\ind(G\setminus N[e]).$$
\end{theorem}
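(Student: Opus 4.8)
The plan is to obtain this statement as the special case $T=P_4$ of Theorem~\ref{general-p4}. Concretely, I would set $T=G[\{x,y,u,v\}]$. By hypothesis this induced subgraph is isomorphic to the $4$-vertex path, and since $u$ and $v$ are adjacent (they are the endpoints of the edge $e$), the edge $e$ must be one of the three edges of that $P_4$; hence $T$ contains $e$, which is the first requirement in Theorem~\ref{general-p4}.

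Next I would check the two remaining hypotheses. The neighbourhood condition holds for all four vertices of $T$: we have $N[u]\subset N[e]$ and $N[v]\subset N[e]$ directly from the definition $N[e]=N[u]\cup N[v]$, while $N[x]\subset N[e]$ and $N[y]\subset N[e]$ are assumed; thus $N[z]\subset N[e]$ for every $z\in V(T)$. For the contractibility of $\ind(T)$ one only needs the standard fact that $\ind(P_4)$ is contractible. This follows, for instance, from the fold lemma (Theorem~\ref{smallthm}): writing the path as $a-b-c-d$, we have $N(a)=\{b\}\subset\{b,d\}=N(c)$, so $\ind(P_4)\htpyequiv\ind(P_4\setminus c)$, and $P_4\setminus c$ is a single edge together with an isolated vertex, whose independence complex is a cone and hence contractible. (Equivalently, $\ind(P_4)\htpyequiv\susp\,\ind(P_1)$, which is an interval.)

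With $T$ chosen in this way, Theorem~\ref{general-p4} applies and gives that the inclusion $\ind(e\sqcup(G\setminus N[e]))\incl\ind(G)$ is null-homotopic; part~b) of Proposition~\ref{prop:les-e} then yields the claimed splitting $\ind(G- e)\htpyequiv \ind(G) \vee \susp^2\,\ind(G\setminus N[e])$.

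There is essentially no obstacle here: all the real work is already done in Theorem~\ref{general-p4}, and the rest is bookkeeping, together with the observation that $P_4$ is the smallest graph with contractible independence complex that can carry an edge with the required neighbourhood containments. The only mildly delicate point is making sure that $e$ is genuinely an edge of the $P_4$ spanned by $\{x,y,u,v\}$ — but this is forced, since its two endpoints are among the four vertices and are adjacent, so the "contains the edge $e$" hypothesis of Theorem~\ref{general-p4} is automatically met.
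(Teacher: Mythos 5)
Your proposal is correct and is exactly the paper's route: Theorem \ref{p4} is stated as the special case of Theorem \ref{general-p4} with $T=G[\{x,y,u,v\}]\cong P_4$, and your verifications (that $e$ lies in $T$, that $N[z]\subset N[e]$ for all $z\in V(T)$, and that $\ind(P_4)$ is contractible via the fold lemma) are precisely the routine checks the paper leaves implicit.
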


\begin{figure}
\begin{tabular}{m{2cm}m{0.4cm}m{2cm}m{0.4cm}m{2cm}m{0.4cm}m{2cm}}
\includegraphics[scale=0.5]{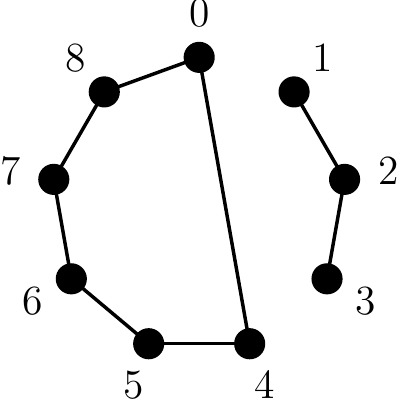} & $\hookrightarrow$ & \includegraphics[scale=0.5]{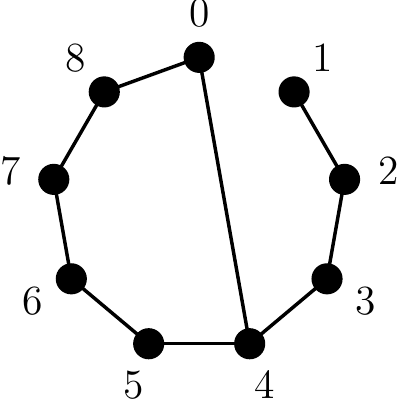} & $\hookrightarrow$ & \includegraphics[scale=0.5]{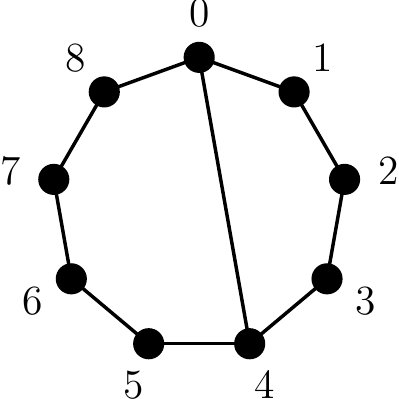} & $\hookleftarrow$ & \includegraphics[scale=0.5]{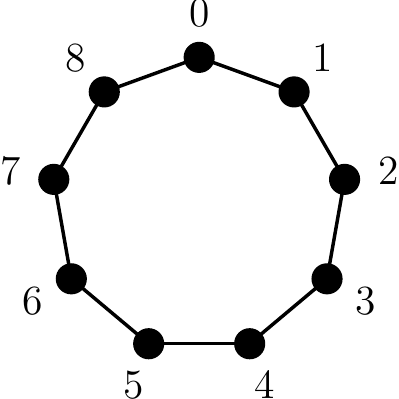}
\end{tabular}
\caption{Graph inclusions in Example \ref{exkozlov} which induce a zigzag of homotopy equivalences upon the application of $\ind(\cdot)$.}
\label{fig:cycles}
\end{figure}

\begin{example}\label{exkozlov}
 We illustrate the applications of isolating edges by reproving Kozlov's equivalence $\ind(C_n)\htpyequiv\susp\,\ind(C_{n-3})$. We present the argument in detail as it is the prototype of the methods used in Section \ref{sect:allpowers}. See Fig.\ref{fig:cycles}.

Start with the cycle $C_n$ with vertices labeled $0,\ldots,n-1$. Let $C_n'=C_n\cup\{(0,4)\}$. The edge $(0,4)$ in $C_n'$ is isolating because removing $N[0]\cup N[4]$ leaves $2$ isolated. By Proposition \ref{prop:les-e}.a) it means that extending $C_n$ to $C_n'$ preserves the homotopy type of the independence complex. More  precisely, the induced inclusion
$$\ind(C_n')\incl\ind(C_n)$$
is an equivalence. Now in $C_n'$ the edge $(0,1)$ is isolating as removing $N[0]\cup N[1]$ isolates $3$. We can delete $(0,1)$ without affecting the independence complex (up to homotopy). Then in $C_n'\setminus\{(0,1)\}$ the edge $(3,4)$ is isolating as removing $N[3]\cup N[4]$ isolates $1$. Again, we can delete $(3,4)$. But the graph we finally obtained, $C_n'\setminus\{(0,1),(3,4)\}$, is a disjoint union of a path $1-2-3$ and $C_{n-3}$ so its independence complex is homotopy equivalent to $S^0\ast\ind(C_{n-3})=\susp\,\ind(C_{n-3})$. We obtain a zigzag of equivalences
$$\susp\,\ind(C_{n-3})\htpyequiv\ind(C_n'\setminus\{(0,1),(3,4)\})\xleftarrow{\htpyequiv}\ind(C_n'\setminus\{(0,1)\})\xleftarrow{\htpyequiv}\ind(C_n')\xrightarrow{\htpyequiv}\ind(C_n)$$
in which every map is induced functorially by some graph morphism.
\end{example}

\begin{notation} From now on we are going to abbreviate such arguments by writing: there is a sequence of isolating operations
$$\add{0,4}{2},\,\del{0,1}{3},\,\del{3,4}{1}$$
which reads: add the edge $(0,4)$, where $2$ is the vertex that certifies the isolating property, then remove $(0,1)$ for which $3$ is the certificate etc. Note that such sequence of operations is indeed a \emph{sequence}: they may no longer be isolating if performed in a different order. Every isolating sequence generates a zigzag of weak equivalences as in the example.
\end{notation}

%%%%%%%%%%%%%%%%%%%%%%%%%%%%%%%%%%%%%%%%%%%%%%%%%%55%%%%%%%%%%%%%%%%%%%%%%%%%%%%%%%%%%%%%%%%%%%%%%%%%%
\section{Mayer-Vietoris splitting}
\label{sect:mayer}

Combinatorial splittings can also be obtained from the following result.
\begin{theorem}
\label{bigthm}
Suppose $X,Y\subset V(G)$ are two vertex sets which satisfy the conditions:
\begin{itemize}
\item $X\cup Y=V(G)$,
\item the independence complex of $G[X\cap Y]$ is contractible,
\item every vertex in $X\setminus Y$ has an edge to every vertex of $Y\setminus X$.
\end{itemize}
Then there is a splitting
$$
\ind(G)\htpyequiv \ind(G[X])\vee \ind(G[Y])
$$
which is natural in the sense that the inclusions $\ind(G[X])\incl\ind(G)$ and $\ind(G[Y])\incl\ind(G)$ induced by inclusions of $G[X]$ and $G[Y]$ in $G$ are homotopic to the inclusions of the two wedge summands.
\end{theorem}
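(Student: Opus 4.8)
The plan is to realize $\ind(G)$ as the union of the two subcomplexes $\ind(G[X])$ and $\ind(G[Y])$, to observe that their intersection is contractible, and then to invoke the standard fact that a space obtained by gluing two spaces along a contractible subcomplex is homotopy equivalent to their wedge.

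First I would check that every independent set $S$ of $G$ lies entirely in $X$ or entirely in $Y$. Indeed, if not, then since $X\cup Y=V(G)$ one can choose $a\in S\cap(Y\setminus X)$ and $b\in S\cap(X\setminus Y)$, and the third hypothesis provides an edge joining $a$ and $b$, contradicting the independence of $S$. Combined with the identities $\ind(G)[X]=\ind(G[X])$, $\ind(G)[Y]=\ind(G[Y])$ and $\ind(G)[X]\cap\ind(G)[Y]=\ind(G[X\cap Y])$ recalled in Section~\ref{sect:notation}, this shows that $\ind(G)=\ind(G[X])\cup\ind(G[Y])$ and that $\ind(G[X])\cap\ind(G[Y])=\ind(G[X\cap Y])$, which is contractible by assumption.

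Writing $K=\ind(G)$, $K_1=\ind(G[X])$, $K_2=\ind(G[Y])$ and $K_0=K_1\cap K_2$, the situation is now purely topological: $K=K_1\cup K_2$ is a simplicial complex and $K_0$ is a contractible subcomplex. Since subcomplex inclusions are cofibrations, the quotient maps $K\to K/K_0$ and $K_i\to K_i/K_0$ are all homotopy equivalences, and $K/K_0$ is literally the wedge $(K_1/K_0)\vee(K_2/K_0)$, because in $K/K_0$ the images of $K_1$ and $K_2$ meet only in the point coming from $K_0$. Chaining $K\htpyequiv K/K_0=(K_1/K_0)\vee(K_2/K_0)\htpyequiv K_1\vee K_2$ yields the asserted equivalence; equivalently, one can note that $K$ is the homotopy pushout of $K_1\leftarrow K_0\to K_2$ and that collapsing the contractible $K_0$ to a point turns this homotopy pushout into a wedge. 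For naturality, I would chase the commutative square whose rows are $\ind(G[X])\incl\ind(G)$ and $K_1/K_0\incl (K_1/K_0)\vee(K_2/K_0)$ and whose verticals are the quotient maps used above; since the bottom row is the inclusion of a wedge summand, the top row is homotopic to the corresponding wedge inclusion under the chosen equivalence, and symmetrically for $Y$.

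I expect the only point requiring care to be the basepoint bookkeeping in the final equivalence $(K_1/K_0)\vee(K_2/K_0)\htpyequiv K_1\vee K_2$: the homotopy inverses of the maps $K_i\to K_i/K_0$ must be chosen so that the collapsed point returns to a common point of $K_0$, which is possible because $K_0$ is contractible and the inclusions $K_0\incl K_i$ are cofibrations. Everything else is either the elementary combinatorial observation above or standard homotopy theory of CW-pairs, so I do not anticipate any real difficulty beyond writing it out cleanly.
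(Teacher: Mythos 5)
Your proposal is correct and follows essentially the same route as the paper: show that every independent set lies in $X$ or in $Y$ (using the third hypothesis), so that $\ind(G)=\ind(G[X])\cup\ind(G[Y])$ with intersection $\ind(G[X\cap Y])$ contractible, and then apply the standard fact that gluing along a contractible subcomplex yields a wedge up to homotopy. Your extra care with the cofibration/quotient argument and the naturality of the wedge inclusions only makes explicit what the paper leaves implicit.
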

\begin{proof}
Let $K=\ind(G[X])$ and $L=\ind(G[Y])$. First let us check that $K\cup L=\ind(G)$. Suppose $\sigma$ is an independent set in $G$ and $\sigma\not\in L$. Then $\sigma$ must have a vertex $v$ in $X\setminus Y$. The third condition implies that $\sigma$ cannot have any vertices in $Y\setminus X$, therefore $\sigma\subset X$ which means $\sigma\in K$. That completes the verification.

Now $\ind(G)$ is the union $K\cup L$ of two subcomplexes such that $K\cap L=\ind(G[X\cap Y])$ is contractible. Then there is an equivalence $\ind(G)\htpyequiv K\vee L$. 
\end{proof}

We can use it to identify the graph inclusions corresponding to the two summands in Theorem \ref{closedneib} and Theorem \ref{general-p4}.

\begin{proof}[Proof of Theorem \ref{closedneib} from Theorem \ref{bigthm}.]
We use the previous theorem with $X=(V(G)\setminus N(v))\cup \{u\}$ and $Y=V(G)\setminus\{v\}$. Clearly $X\cup Y=V(G)$. Since $X\cap Y=\{u\}\cup(V(G)\setminus N[v])$ and $u$ does not have any edges to $V(G)\setminus N[v]$, the induced graph $G[X\cap Y]$ has $u$ as an isolated vertex, so the complex $\ind(G[X\cap Y])$ is contractible. Finally $X\setminus Y=\{v\}$ and $Y\setminus X=N(v)$ so the third condition in Theorem \ref{bigthm} is automatically satisfied. 

In the splitting obtained from Theorem \ref{bigthm} the complex $\ind(G[Y])$ is $\ind(G\setminus v)$. The graph in the second summand, $G[X]$, is the disjoint union of an edge $e=(u,v)$ with $G\setminus N[v]$. This is because neither $v$ (by definition) nor $u$ (by assumption) have edges to $V(G)\setminus N[v]$. It follows that $\ind(G[X])=\ind(e)\ast\ind(G\setminus N[v])=S^0\ast\ind(G\setminus N[v])=\susp\,\ind(G\setminus N[v])$ and the proof is complete.
\end{proof}

\begin{proof}[Proof of Theorem \ref{general-p4} from Theorem \ref{bigthm}.]
Let $H=G- e$. First extend $H$ to a bigger graph $H_+$ by adding an extra vertex $w$ with edges to $N(u)\cup N(v)$. The inclusion $\ind(H)\incl\ind(H_+)$ is an homotopy equivalence by Proposition \ref{prop:les-v}.a) (because $H_+\setminus N[w]$ contains isolated vertices $u$, $v$). In $H_+$ the operation $\add{u,v}{w}$ is isolating. Let $G_+$ denote the resulting graph. It contains $G$ as $G_+\setminus w$ and $\ind(G_+)\htpyequiv \ind(G- e)$.

Set $X=(V(G)\setminus N[e])\cup V(T)\cup \{w\}$ and $Y=V(G)$. Clearly $X\cup Y=V(G_+)$. Since $X\cap Y=V(T)\cup(V(G)\setminus N[e])$ and $V(T)$ has no edges to $V(G)\setminus N[e]$, the induced graph $G[X\cap Y]$ contains $T$ as a connected component and $\ind(G[X\cap Y])$ is contractible. Finally $X\setminus Y=\{w\}$ and $Y\setminus X\subset N(w)$ so the third condition in Theorem \ref{bigthm} is automatically satisfied. 

In the splitting of $\ind(G_+)$ obtained from Theorem \ref{bigthm} the complex $\ind(G_+[Y])$ is $\ind(G)$. The graph in the other summand, $G_+[X]$, is the disjoint union of $G_+[V(T)\cup\{w\}]$ with $G\setminus N[e]$. But $\ind(G_+[V(T)\cup\{w\}])$ consists of the contractible subspace $\ind(T)$ together with two edges $wu$ and $wv$, so it is homotopy equivalent to $S^1$. It follows that $\ind(G_+[X])\htpyequiv S^1\ast\ind(G\setminus N[e])=\susp^2\,\ind(G\setminus N[e])$ and the proof is complete.
\end{proof}

%========================================================================
\section{Applications and examples}
\label{sect:unification}
We start with a simple application of isolating edges to a known reduction result.
\begin{lemma}[\cite{Cso2,Mar,Bar}]
\label{3-path}
Let $G$ be a graph and $e=(x,y)$ an edge. If $G'$ is obtained from $G$ by replacing $e$ with a path $x-u-v-w-y$ with $3$ new vertices then $\ind(G')\htpyequiv\susp\,\ind(G)$.
\end{lemma}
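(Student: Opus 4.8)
The plan is to follow the template of Example~\ref{exkozlov}: produce a short sequence of isolating operations turning $G'$ into the disjoint union of $G$ with a short path, and then compute the homotopy type of that union.

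Label the inserted path $x-u-v-w-y$, so that the new vertices satisfy $N_{G'}(u)=\{x,v\}$, $N_{G'}(v)=\{u,w\}$ and $N_{G'}(w)=\{v,y\}$; recall also that $y\in N_G(x)$ and $x\in N_G(y)$ since $e$ is an edge of $G$. I claim that
$$\add{x,y}{v},\quad \del{x,u}{w},\quad \del{w,y}{u}$$
is a valid isolating sequence. After adding $(x,y)$ one has $N[x]=\{x,u\}\cup N_G(x)$ and $N[y]=\{y,w\}\cup N_G(y)$, so deleting $N[x]\cup N[y]$ removes both neighbours $u,w$ of $v$ and leaves $v$ isolated. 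In the resulting graph $N[x]\cup N[u]=\{x,u,v\}\cup N_G(x)$, which contains $y$, hence removing it isolates $w$, whose only neighbours were $v$ and $y$. After deleting $(x,u)$ the vertex $u$ has only $v$ as a neighbour, while $N[w]\cup N[y]=\{w,v,y,x\}\cup N_G(y)$ contains $v$, so the last operation isolates $u$. By the discussion following Example~\ref{exkozlov} this sequence induces a zigzag of homotopy equivalences between $\ind(G')$ and $\ind(G_{\mathrm{fin}})$, where $G_{\mathrm{fin}}$ is $G$ with the three vertices $u,v,w$ attached only through the edges $(u,v)$ and $(v,w)$; that is, $G_{\mathrm{fin}}=G\sqcup P_3$ with $P_3$ the path $u-v-w$.

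It then remains to observe that $\ind(G\sqcup P_3)\htpyequiv\susp\,\ind(G)$. Indeed, in $G_{\mathrm{fin}}$ the vertices $u$ and $w$ have the same open neighbourhood $\{v\}$, so by Theorem~\ref{smallthm} we may delete one of them without changing the homotopy type, leaving $G$ disjoint from a single edge; its independence complex is $S^0\ast\ind(G)=\susp\,\ind(G)$. I do not expect any serious obstacle: the only points requiring care are that each of the three operations is isolating \emph{in the graph to which it is applied} — the order is essential (as noted right after Example~\ref{exkozlov}), and in particular $(x,u)$ is not isolating in $G'$ itself, which is exactly why $(x,y)$ must be added first — together with the elementary fact that $\ind(P_3)\htpyequiv S^0$ rather than being contractible.
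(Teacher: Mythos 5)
Your proof is correct and is essentially the paper's own argument: the paper uses exactly the same isolating sequence $\add{x,y}{v},\,\del{x,u}{w},\,\del{y,w}{u}$ to reduce $G'$ to $\{(u,v),(v,w)\}\sqcup G$, and then concludes via $\ind(P_3)\htpyequiv S^0$ (a step the paper leaves implicit, which you spell out, equivalently, through Theorem~\ref{smallthm}). Your verification of each isolating certificate matches the intended justification, so there is nothing to add.
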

\begin{proof}
There is a sequence of isolating operations in $G'$:
$$\add{x,y}{v},\,\del{x,u}{w},\,\del{y,w}{u}$$
which results in the graph $\{(u,v),(v,w)\}\sqcup G$.
\end{proof}
A more general result we can recover using isolating operations follows also from the main theorem of \cite{Bar}.
\begin{lemma}
\label{deg2}
If $v$ is a vertex of $G$ of degree $2$ with neighbours $u$, $w$ which satisfy $N[u]\cap N[w]=\{v\}$ then $\ind(G)$ is homotopy equivalent to $\susp\,\ind(G')$ where $G'$ is obtained from $G$ by removing $u, v, w$ and spanning a complete bipartite graph between vertices which belonged to $N[u]$ and those from $N[w]$.
\end{lemma}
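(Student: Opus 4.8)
The plan is to run a sequence of isolating operations, in the spirit of the proof of Lemma~\ref{3-path}, that peels the vertices $u,v,w$ off $G$ as a separate induced path $u-v-w$, leaving exactly $G'$ on the other side; the lemma then follows from $\ind(P_3\sqcup G')=\ind(P_3)\ast\ind(G')\htpyequiv S^0\ast\ind(G')=\susp\ind(G')$. Write $A=N(u)\setminus\{v\}$ and $B=N(w)\setminus\{v\}$. From $v$ having degree $2$ with neighbours $u,w$ and from $N[u]\cap N[w]=\{v\}$ one reads off the facts that are used throughout: the sets $\{u\},\{v\},\{w\},A,B$ are pairwise disjoint, $A\cup B\subset V(G)\setminus\{u,v,w\}$, and in $G$ there is no edge between $u$ and $w$, no edge between $u$ and any vertex of $B$, and no edge between $w$ and any vertex of $A$ --- each of these would put a vertex other than $v$ into $N[u]\cap N[w]$.

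I would then perform three phases of isolating operations. \emph{Phase 1:} for every $a\in A$, $b\in B$ such that $(a,b)$ is not yet an edge, do $\add{a,b}{v}$; indeed, once $(a,b)$ has been added, $u\in N[a]$ and $w\in N[b]$, so removing $N[(a,b)]$ deletes both neighbours of $v$, while $v\notin N[(a,b)]$, and hence $v$ is isolated. Let $G_1$ be the resulting graph: it contains every $A$--$B$ edge and is otherwise equal to $G$. \emph{Phase 2:} delete the edges $(u,a)$, $a\in A$, one at a time, each via $\del{u,a}{w}$; here $v\in N[u]$ and, by Phase~1, $B\subset N[a]$, so removing $N[(u,a)]$ deletes the entire neighbourhood $\{v\}\cup B$ of $w$, while $w\notin N[(u,a)]$, so $w$ is isolated. \emph{Phase 3:} delete the edges $(w,b)$, $b\in B$, one at a time, each via $\del{w,b}{u}$; after Phase~2 the only neighbour of $u$ is $v$, and $v\in N[w]$, so once $N[(w,b)]$ is removed $u$ is isolated, and $u\notin N[(w,b)]$.

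After these operations $\{u,v,w\}$ spans an induced $P_3$ with no edges to the remaining vertices, and the remaining vertices carry precisely the edges of $G$ inside $V(G)\setminus\{u,v,w\}$ together with the complete bipartite graph between $A$ and $B$; that is, the final graph is $P_3\sqcup G'$. The zigzag of weak equivalences produced by the isolating sequence then yields $\ind(G)\htpyequiv\ind(P_3\sqcup G')\htpyequiv\susp\ind(G')$, where $\ind(P_3)\htpyequiv S^0$ is used exactly as in Example~\ref{exkozlov}. The point that needs care --- and is really the only obstacle --- is the order of the three phases: it is the $A$--$B$ edges added in Phase~1 that make the Phase-2 edges isolating, and it is the completion of Phase~2 that makes the Phase-3 edges isolating, so the phases cannot be interleaved arbitrarily; the degenerate cases $A=\emptyset$ or $B=\emptyset$, where $u$ or $w$ already has degree $1$, reduce the argument and are immediate.
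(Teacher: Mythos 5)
Your proposal is correct and follows essentially the same route as the paper: the paper also performs the isolating insertions $\add{x,y}{v}$ for all missing pairs between $N[u]\setminus\{u,v\}$ and $N[w]\setminus\{w,v\}$, then the deletions $\del{u,x}{w}$ followed by $\del{w,y}{u}$, ending with $\{(u,v),(v,w)\}\sqcup G'$ and concluding via the suspension as in Lemma \ref{3-path}. Your write-up merely spells out the certificate verifications in more detail, including the phase-ordering point, which the paper leaves implicit.
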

\begin{proof}
Denote $U=N[u]\setminus\{u,v\}$ and $W=N[w]\setminus \{w,v\}$. We can first perform all isolating insertions $\add{x,y}{v}$ for all pairs $x\in U,\, y\in W$ which had not already been an edge. Then we can perform isolating deletions $\del{u,x}{w}$ for $x\in U$ followed by $\del{w,y}{u}$ for $y\in W$. We end up with $\{(u,v),(v,w)\}\sqcup G'$ and conclude as before. 
\end{proof}
Let us mention two more specializations of Theorem \ref{closedneib}.
\begin{corollary}
\label{deg1}
If $u$ is a vertex of degree $1$ and $v$ is its only neighbour then $\ind(G)\htpyequiv\susp\,\ind(G\setminus N[v])$.
\end{corollary}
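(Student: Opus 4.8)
The plan is to obtain this as an immediate instance of Theorem~\ref{closedneib} applied to the adjacent pair of vertices $u,v$. First I would verify the hypothesis $N[u]\subset N[v]$: since $u$ has degree $1$ with unique neighbour $v$, we have $N[u]=\{u,v\}$, and both of these vertices lie in $N[v]$ (indeed $v\in N[v]$ always, and $u\in N(v)$ because $u$ is adjacent to $v$). Theorem~\ref{closedneib} then gives
$$\ind(G)\htpyequiv\ind(G\setminus v)\vee\susp\,\ind(G\setminus N[v]).$$

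Next I would show that the first wedge summand is homotopically trivial. In $G\setminus v$ the vertex $u$ has lost its only neighbour, so it is isolated; hence $\ind(G\setminus v)=u\ast\ind(G\setminus\{u,v\})$ is a cone and therefore contractible. Plugging this into the displayed equivalence leaves $\ind(G)\htpyequiv\susp\,\ind(G\setminus N[v])$, which is the claim. (Equivalently, one could read the result straight off the cofibre sequence of Proposition~\ref{prop:les-v} for the vertex $v$: its middle term $\ind(G\setminus v)$ is contractible, so the homotopy cofibre of $\ind(G\setminus N[v])\incl\ind(G\setminus v)$ is a suspension of $\ind(G\setminus N[v])$.)

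There is no real obstacle here — this is a one-line corollary of Theorem~\ref{closedneib} — apart from the minor bookkeeping of the degenerate case $G=\{(u,v)\}$, where $G\setminus N[v]$ is the empty graph and one should merely check that the conventions of Section~\ref{sect:notation} make both sides equal to $\s{0}$.
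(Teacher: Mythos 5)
Your proposal is correct and follows exactly the paper's route: apply Theorem~\ref{closedneib} to the pair $u,v$ (noting $N[u]=\{u,v\}\subset N[v]$) and then kill the summand $\ind(G\setminus v)$ by observing that $u$ is isolated in $G\setminus v$, making that complex a cone. The aside via Proposition~\ref{prop:les-v} and the degenerate-case remark are fine but not needed beyond the paper's one-line argument.
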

\begin{proof}
The vertices $u$ and $v$ satisfy the assumptions of Theorem \ref{closedneib}. Moreover $G\setminus v$ has $u$ as an isolated vertex so $\ind(G\setminus v)$ is contractible.
\end{proof}
\begin{corollary}[\cite{Eng1,Eng2,Kaw}]
\label{clique-neib}
Let $u$ be a vertex such that $N(u)$ is a clique. Then there is a homotopy equivalence
$$\ind(G)\htpyequiv \bigvee_{v\in N(u)} \susp\, \ind(G\setminus N[v]).$$
\end{corollary}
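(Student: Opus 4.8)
The plan is to argue by induction on $d=|N(u)|$, peeling off one wedge summand at a time with Theorem \ref{closedneib}.

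First, the base case. When $d=0$ the vertex $u$ is isolated, so $\ind(G)=u\ast\ind(G\setminus u)$ is contractible and the right-hand side is an empty wedge, i.e. a point, so the two agree. (Alternatively one can start from $d=1$, where the assertion is exactly Corollary \ref{deg1}.)

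For the inductive step, fix $d\geq 1$ and choose any $v\in N(u)$. The key combinatorial observation is that $N[u]\subset N[v]$: the vertices $u,v$ are adjacent, and every other vertex of $N(u)$ is adjacent to $v$ since $N(u)$ is a clique, whence $N[u]=\{u\}\cup N(u)\subset\{u,v\}\cup N(v)=N[v]$. So Theorem \ref{closedneib} applies with $v$ as the deleted vertex and gives $\ind(G)\htpyequiv\ind(G\setminus v)\vee\susp\,\ind(G\setminus N[v])$. Now in $G\setminus v$ the open neighbourhood of $u$ is $N(u)\setminus\{v\}$, which is still a clique but of size $d-1$; applying the inductive hypothesis expresses $\ind(G\setminus v)$ as a wedge of suspensions $\susp\,\ind\big((G\setminus v)\setminus N_{G\setminus v}[w]\big)$ over $w\in N(u)\setminus\{v\}$. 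Finally I would check that $v\in N(w)$ for each such $w$ (clique again), so that $(G\setminus v)\setminus N_{G\setminus v}[w]=G\setminus N[w]$; substituting and recombining with the $\susp\,\ind(G\setminus N[v])$ summand split off above reproduces precisely $\bigvee_{v\in N(u)}\susp\,\ind(G\setminus N[v])$.

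I do not expect a real obstacle here; the only delicate point is tracking closed neighbourhoods across the deletion of $v$ --- i.e. verifying that adjoining $v$ back to $N_{G\setminus v}[w]$ returns $N_G[w]$ --- which is where the clique hypothesis on $N(u)$ is used a second time. Since Theorem \ref{closedneib} is natural and the induction merely composes natural equivalences, the equivalence obtained is automatically natural.
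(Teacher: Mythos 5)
Your proposal is correct and is essentially the paper's own argument: pick $v\in N(u)$, note $N[u]\subset N[v]$ so Theorem \ref{closedneib} splits off $\susp\,\ind(G\setminus N[v])$, induct on $G\setminus v$ where $N(u)\setminus\{v\}$ is still a clique, and use the clique condition to identify $(G\setminus v)\setminus N_{G\setminus v}[w]$ with $G\setminus N[w]$. The only difference is your explicit treatment of the base case, which the paper leaves implicit.
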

\begin{proof}
Let $v\in N(u)$ be any vertex. Then $N[u]\subset N[v]$ so $\ind(G)$ splits into $\susp\,\ind(G\setminus N[v])$ and $\ind(G\setminus v)$. Let $G'=G\setminus v$. In $G'$ the neighbours of $u$ again form a clique so by induction $\ind(G')$ splits as $\bigvee_{v'\in N_{G'}(u)}\susp\,\ind(G'\setminus N_{G'}[v'])$. However, since $(v,v')$ is an edge in $G$ for all $v'\in N_{G'}(u)$ we have $G'\setminus N_{G'}[v']=G\setminus N[v']$ which together with the first summand gives the desired splitting.
\end{proof}

\begin{example}
Suppose $G$ is a connected graph with $n$ vertices and $m$ edges. Let $G_3$ denote the graph obtained from $G$ by subdividing each edge into $3$ parts. Let $e$ be any of the ``middle'' edges of $G_3$, that is edges connecting two subdividing vertices. 

In $G_3-e$ we have two vertices of degree $1$ and we see that it can be reduced to the empty graph by successfully applying Corollary \ref{deg1} $n$ times, once for each vertex of $G$. In $G_3\setminus N[e]$ the situation is similar, but this time we perform one reduction for each of the remaining $m-1$ edges. It means that $\ind(G_3-e)\htpyequiv \susp^n\emptyset=S^{n-1}$ and $\ind(G_3\setminus N[e])\htpyequiv\susp^{m-1}\emptyset=S^{m-2}$ and the cofibre sequence \eqref{cofib2} becomes
$$S^{m-1}\to\ind(G_3)\to S^{n-1}\to S^m\to \susp\,\ind(G_3)\to S^n\to S^{m+1}\to\cdots.$$
If $G$ is not a tree then $m>n-1$ so the map $S^{n-1}\to S^m$ must be null-homotopic and we get $\susp\,\ind(G_3)\htpyequiv S^n\vee S^m$. This almost recovers the result of Csorba \cite{Cso2} who proved that in fact $\ind(G_3)\htpyequiv S^{n-1}\vee S^{m-1}$.
\end{example}

Before stating the next result recall that the \emph{domination number} $\gamma(G)$ of $G$ is the minimal cardinality of a \emph{dominating set} in $G$, that is a subset $W\subset V(G)$ such that $\bigcup_{w\in W}N[w]=V(G)$. A graph is \emph{chordal} if it does not have an induced cycle of length at least $4$. Moreover, let $\psi$ be a function on graphs with values in $\{0,1,\ldots\}\cup\{\infty\}$ defined as follows:
\begin{displaymath}
\psi(G)=\begin{cases}
0 & \textrm{ if } G=\emptyset\\
\infty & \textrm{ if } G\neq\emptyset \textrm{ is edgeless }\\
\max_{e\in E(G)}\big\{\min \{\psi(G- e),\,\psi(G\setminus N[e])+1\}\big\} & \textrm{ otherwise. }
\end{cases}
\end{displaymath}
A graph is said to \emph{satisfy the Aharoni-Berger-Ziv} conjecture if $\psi(G)=\conn(\ind(G))+2$, where $\conn(\cdot)$ denotes the topological connectivity of a space. It is known that always $\psi(G)\leq\conn(\ind(G))+2$ (see \cite{AdaB,ABZ,Mesh}) and that there are examples when the inequality is strict \cite{AdaB}. The following was proved in \cite{Kaw}, with the ``wedge of spheres'' part also following from earlier results.
\begin{corollary}
Suppose $G$ is a chordal graph.
\begin{itemize}
\item[a)] \cite{VanTuyl,Russ,DochEng,Kaw} $\ind(G)$ is either contractible or homotopy equivalent to a wedge of spheres of dimension at least $\gamma(G)-1$,
\item[b)] \cite{Kaw} $G$ satisfies the Aharoni-Berger-Ziv conjecture.
\end{itemize}
\end{corollary}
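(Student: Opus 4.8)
The plan is to use induction on the number of edges of $G$. Recall that every chordal graph has a \emph{simplicial} vertex (one whose neighbourhood is a clique), and that if $u$ is simplicial and $v\in N(u)$ then $N[u]\subset N[v]$. Note also that $G\setminus N[v]$ is chordal, being an induced subgraph, and that $G-(u,v)$ is chordal as well, since $u$ remains simplicial in it while $(G-(u,v))\setminus u=G\setminus u$ is chordal; moreover both $G\setminus N[v]$ and $G-(u,v)$ have strictly fewer edges than $G$.

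I would prove a) first. If $G$ is empty then $\ind(G)=S^{-1}$, and if $G$ has an isolated vertex then $\ind(G)$ is a cone, hence contractible; otherwise $G$ has an edge and a simplicial vertex $u$ with $N(u)\neq\emptyset$, so Corollary \ref{clique-neib} gives $\ind(G)\htpyequiv\bigvee_{v\in N(u)}\susp\,\ind(G\setminus N[v])$. By induction each $\ind(G\setminus N[v])$ is contractible or a wedge of spheres of dimension $\geq\gamma(G\setminus N[v])-1$, and $\gamma(G\setminus N[v])\geq\gamma(G)-1$ because adjoining $v$ to a dominating set of $G\setminus N[v]$ dominates $G$. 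Hence each summand $\susp\,\ind(G\setminus N[v])$ is contractible or a wedge of spheres of dimension $\geq\gamma(G)-1$, and deleting the contractible summands of the wedge completes the step.

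For b) I would retain the known inequality $\psi(G)\leq\conn(\ind(G))+2$ and establish the reverse one, again by induction on the number of edges, now also using part a). The edgeless cases are immediate: $\psi(\emptyset)=0=\conn(S^{-1})+2$, while a nonempty edgeless $G$ has $\psi=\infty$ and contractible $\ind(G)$. If $G$ has an edge, choose a simplicial vertex $u$ with a neighbour $v$ and set $e=(u,v)$, so that $N[e]=N[v]$; I claim this single edge already forces the bound. By induction $\psi(G-e)=\conn(\ind(G-e))+2$ and $\psi(G\setminus N[e])=\conn(\ind(G\setminus N[v]))+2$. Applying Theorem \ref{smallthm} to the pair $u,v$ inside $G-e$ (where one checks $N_{G-e}(u)\subset N_{G-e}(v)$) gives $\ind(G-e)\htpyequiv\ind(G\setminus v)$, whereas Theorem \ref{closedneib} applied to $u,v$ in $G$ gives $\ind(G)\htpyequiv\ind(G\setminus v)\vee\susp\,\ind(G\setminus N[v])$, so that $\conn(\ind(G))=\min\{\conn(\ind(G\setminus v)),\,\conn(\ind(G\setminus N[v]))+1\}$; part a) is used here to know that all the complexes involved are contractible or wedges of spheres, whence suspension raises connectivity by exactly one. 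Putting this together, $\psi(G-e)=\conn(\ind(G\setminus v))+2\geq\conn(\ind(G))+2$ and $\psi(G\setminus N[e])+1=\conn(\ind(G\setminus N[v]))+3\geq\conn(\ind(G))+2$, hence $\psi(G)\geq\min\{\psi(G-e),\psi(G\setminus N[e])+1\}\geq\conn(\ind(G))+2$.

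The main obstacle is that the recursion defining $\psi$ involves the edge deletion $G-e$, which for a generic edge need not be chordal; choosing $e$ at a simplicial vertex is precisely the trick that keeps $G-e$ chordal, so that the induction closes, and that at the same time makes the two homotopy identifications above available. A secondary technical point is the interaction of topological connectivity with suspension and wedge, which is the reason part a) must be invoked inside the proof of part b).
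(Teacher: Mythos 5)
Your proof is correct and follows essentially the same route as the paper: part a) via the simplicial-vertex splitting of Theorem \ref{closedneib} (you package the iteration as Corollary \ref{clique-neib}, which only changes which domination inequality is needed), and part b) via the same edge $e=(u,v)$ at a simplicial vertex, using Theorems \ref{smallthm} and \ref{closedneib} together with a) to turn the splitting into the connectivity identity and close the recursion for $\psi$. The only differences are presentational — you make explicit the induction and the chordality check for $G-e$ that the paper leaves as a quick verification.
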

\begin{proof}
a) The result is true for the empty graph (we assume $S^{-1}=\emptyset$, which is consistent with $\susp S^{-1}=S^0$) and for any discrete graph. Now suppose $G$ has at least one edge. By a well-known characterization (see \cite[Thm. 9.21]{BonMur}) every chordal graph has a vertex $u$ such that $N(u)$ is a clique. Choose any $v\in N(u)$. Then $N[u]\subset N[v]$, so by Theorem \ref{closedneib}
$$\ind(G)\htpyequiv\ind(G\setminus v)\vee\susp\,\ind(G\setminus N[v]).$$

Both graphs $G\setminus v$ and $G\setminus N[v]$ are chordal so by induction their independence complexes are either contractible or equivalent to wedges of spheres of dimension at least, respectively, $\gamma(G \setminus v)-1$ and $\gamma(G\setminus N[v])-1$. Of course $\gamma(G)\leq\gamma(G\setminus N[v])+1$. Moreover, every dominating set in $G\setminus v$ is also dominating in $G$ because to dominate $u$ it must contain a vertex in $N[u]\setminus\{v\}\subset N[v]$. It means that $\gamma(G)\leq\gamma(G\setminus v)$. It follows that each wedge summand of $\ind(G)$ is either contractible or a wedge of spheres of dimension at least $\min\{\gamma(G)-1,(\gamma(G)-2)+1\}=\gamma(G)-1$.

b) Let $f=(u,v)$. Then in the graph $G- f$ we have  $N(u)\subset N(v)$, so by Theorem \ref{smallthm} the complex $\ind(G- f)$ is homotopy equivalent to $\ind(G\setminus v)$. The complex $\ind(G\setminus N[v])$ is clearly equal to $\ind(G\setminus N[f])$ as $N[u]\subset N[v]$ in $G$. The splitting of a) can thus be rewritten as
$$\ind(G)\htpyequiv \ind(G- f)\vee\susp\,\ind(G\setminus N[f]).$$
The graph $G\setminus N[f]$ is chordal and a quick verification shows that the condition $N[u]\subset N[v]$ and the fact that $N(u)$ is a clique imply that also $G- f$ is chordal. By a) their independence complexes are wedges of spheres, so we have
\begin{align*}
\conn(\ind(G))+2&=\min\{\conn(\ind(G- f)),\,\conn(\ind(G\setminus N[f]))+1\}+2\\
&=\min\{\psi(G- f),\,\psi(G\setminus N[f])+1\}\\
&\leq\max_{e\in E(G)}\big\{\min\{\psi(G- e),\,\psi(G\setminus N[e])+1\}\big\}=\psi(G)
\end{align*}
where the first equality follows from the splitting. Since we always have $\psi(G)\leq\conn(\ind(G))+2$, we get $\psi(G)=\conn(\ind(G))+2$.
\end{proof}

\begin{figure}
\includegraphics[scale=0.8]{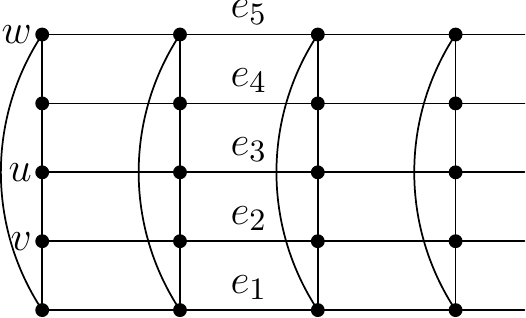} 
\caption{The graph $P_n\times C_5$.}
\label{fig:pnc5}
\end{figure}

\begin{example}
The next example is related to the independence complexes of cylindrical grids and the hard-squares model in statistical physics, as in \cite{JonCylinder}. Let $G=P_n\times C_5$ be the graph of Fig.\ref{fig:pnc5}. Let us show that $\ind(G\setminus N[e_1])\htpyequiv \ast$. In the graph $G\setminus N[e_1]$ we can apply Corollary \ref{deg1} to $v$, which is of degree $1$ with unique neighbour $u$. However, removing $N[u]$ leaves $w$ isolated, so the claim is proved. By Proposition \ref{prop:les-e}.a) we get $\ind(G)\htpyequiv \ind(G-e_1)$. We can remove $e_2,\ldots,e_5$ in the same way and finally
$$\ind(P_n\times C_5)\htpyequiv \ind(P_{n-2}\times C_{5})\ast\ind(P_2\times C_5)\htpyequiv\ind(P_{n-2}\times C_{5})\ast S^1= \susp^2\,\ind(P_{n-2}\times C_5)$$
where the equivalence $\ind(P_2\times C_5)\htpyequiv S^1$ is left to the reader. This was also found in \cite{Thapper} using explicit Morse matchings. Similar results can be obtained with the same method for other small grids.
\end{example}

%========================================================================
%========================================================================
%========================================================================
\section{Powers of cycles}
\label{sect:allpowers}
In this section we develop a systematic approach to the complexes $\ind(C_n^r)$ and prove Theorem \ref{thm:bigsplitting}. The idea of the proof is as follows. We extend $C_n^r$ to another graph $\tc$ on the same vertex set but with more edges. The new graph will have the property that $\ind(\tc)\htpyequiv\susp^2\,\ind(C_{n-(3r+3)}^r)$ (Proposition \ref{prop:model}). Since $\tc$ is obtained from $C_n^r$ by inserting new edges, we get a natural inclusion
$$\susp^2\,\ind(C_{n-(3r+3)}^r)\htpyequiv\ind(\tc)\incl\ind(C_n^r).$$
This is our guess for what the inclusion of the first wedge summand in Theorem \ref{thm:bigsplitting} should be. We then need to show that, up to homotopy, the image of this inclusion indeed splits off. This is accomplished by analyzing the construction of $\tc$ from $C_n^r$ edge by edge and showing that every single edge insertion yields a splittable inclusion of independence complexes. For  this we use an obvious inductive consequence of Proposition \ref{prop:les-e}.b), which we record below for convenience.

\begin{lemma}
\label{lem:inductive}
Suppose $G$ is a graph and $e_1,\ldots,e_k$ is a sequence of edges which are \emph{not} in $G$. Let $G_0=G$ and let $G_i=G_{i-1}\cup e_i$ for $1\leq i\leq k$. Suppose that for each $i=1,\ldots,k$ the inclusion
$$\ind(e_i\sqcup(G_i\setminus N[e_i]))\incl\ind(G_i)$$
is null-homotopic. Then there is a homotopy equivalence
$$\ind(G)\htpyequiv \ind(G_k)\vee\bigvee_{i=1}^k \susp^2\,\ind(G_i\setminus N[e_i]).$$
\end{lemma}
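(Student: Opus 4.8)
The statement is an immediate inductive application of Proposition~\ref{prop:les-e}.b), so the plan is simply to iterate that splitting $k$ times and bookkeep the wedge summands. The key observation is that at each stage the hypothesis ``$\ind(e_i\sqcup(G_i\setminus N[e_i]))\incl\ind(G_i)$ is null-homotopic'' is exactly the condition needed to apply part b) of Proposition~\ref{prop:les-e} to the graph $G_i$ and the edge $e_i$, noting that $G_i - e_i = G_{i-1}$ by construction. This yields
$$\ind(G_{i-1})\htpyequiv\ind(G_i)\vee\susp^2\,\ind(G_i\setminus N[e_i]).$$

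I would then run the induction downward on $i$ from $k$ to $1$. At the top, $\ind(G_{k-1})\htpyequiv\ind(G_k)\vee\susp^2\,\ind(G_k\setminus N[e_k])$. Substituting the analogous splitting for $\ind(G_{k-1})$, $\ind(G_{k-2})$, and so on, and using associativity of the wedge sum, after $k$ steps one arrives at
$$\ind(G)=\ind(G_0)\htpyequiv\ind(G_k)\vee\bigvee_{i=1}^k\susp^2\,\ind(G_i\setminus N[e_i]),$$
which is the claim. Formally this is a one-line induction: assume $\ind(G_{j})\htpyequiv\ind(G_k)\vee\bigvee_{i=j+1}^k\susp^2\,\ind(G_i\setminus N[e_i])$ for some $0<j\le k$ (base case $j=k$ being trivial), apply Proposition~\ref{prop:les-e}.b) to $G_j$ and $e_j$ to replace $\ind(G_j)$ by $\ind(G_{j-1})$ on the left (equivalently, rewrite $\ind(G_{j-1})\htpyequiv\ind(G_j)\vee\susp^2\,\ind(G_j\setminus N[e_j])$) and absorb the new summand into the wedge, obtaining the statement for $j-1$.

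There is essentially no obstacle here — the lemma is bookkeeping. The only point requiring a word of care is that the homotopy equivalences produced by Proposition~\ref{prop:les-e}.b) are genuine equivalences of the relevant spaces (not merely abstract isomorphisms on homology or homotopy groups), so that composing $k$ of them and invoking associativity and the homotopy invariance of the wedge construction is legitimate; this is guaranteed by the construction of the splitting as a mapping-cone argument in Section~\ref{sect:notation}. One should also remark that the resulting equivalence is natural in the sense discussed in the paper, since each individual splitting is, which is what makes the lemma usable for tracking the image of the inclusion $\susp^2\,\ind(C_{n-(3r+3)}^r)\incl\ind(C_n^r)$ in the proof of Theorem~\ref{thm:bigsplitting}.
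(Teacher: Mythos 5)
Your proof is correct and follows exactly the paper's argument: apply Proposition~\ref{prop:les-e}.b) to $G_i$ and $e_i$ using $G_i - e_i = G_{i-1}$, obtaining $\ind(G_{i-1})\htpyequiv\ind(G_i)\vee\susp^2\,\ind(G_i\setminus N[e_i])$, and then induct over the $k$ splittings. Nothing is missing.
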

\begin{proof}
For every $i=1,\ldots,k$ we have $G_i- e_i=G_{i-1}$, so Proposition \ref{prop:les-e}.b) yields splittings $\ind(G_{i-1})\htpyequiv \ind(G_i)\vee\susp^2\,\ind(G_i\setminus N[e_i])$, from which the result follows by induction.
\end{proof}

We now describe the construction of the graph $\tc$. The vertices of an $n$-cycle are labeled with elements of $\zet/n$. We start with $C_n^r$ and add new edges in the order described below (see Fig.\ref{fig:stages} and Fig.\ref{fig:general}).
\begin{itemize}
\item \emph{First phase.} It consists of $r-1$ stages.
\begin{itemize}
\item In stage $s$, where $1\leq s \leq r-1$, we add two groups of edges:
\begin{itemize}
\item first group: $(i,i+2r-s+2)$ for $i=1,\ldots,r+s+1$,
\item second group: $(i,i+3r-s+3)$ for $i=1,\ldots,s$.
\end{itemize}
\end{itemize}
\item \emph{Second phase.} Add all the edges of the form
$$(-x,3r+3+y)\quad\mathrm{for}\quad 0\leq x\leq r-1,\, 1\leq y\leq r,\, x+y\leq r.$$
\end{itemize}

\begin{figure}
\begin{tabular}{cc}
\includegraphics[scale=0.7]{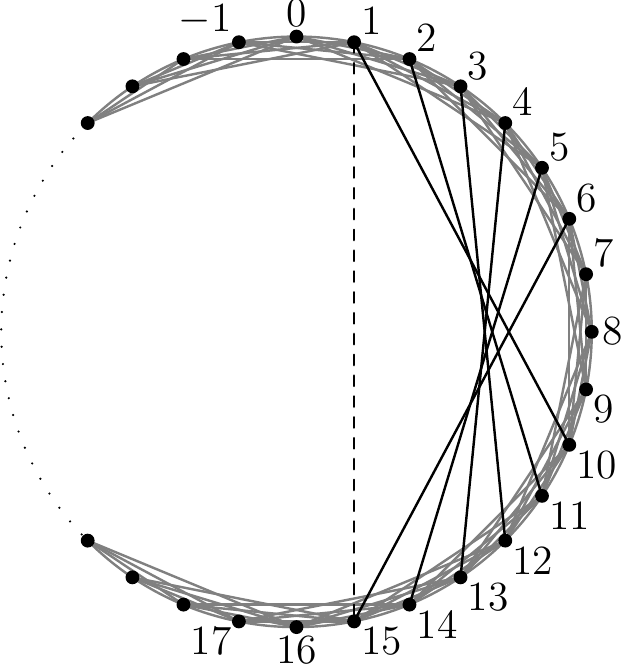} & \includegraphics[scale=0.7]{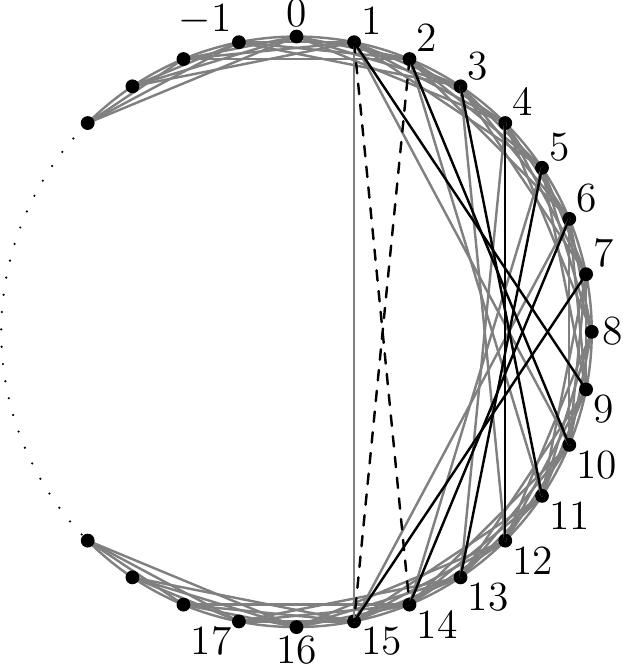}\\
a) & b) \vspace{0.3cm}\\ 
\includegraphics[scale=0.7]{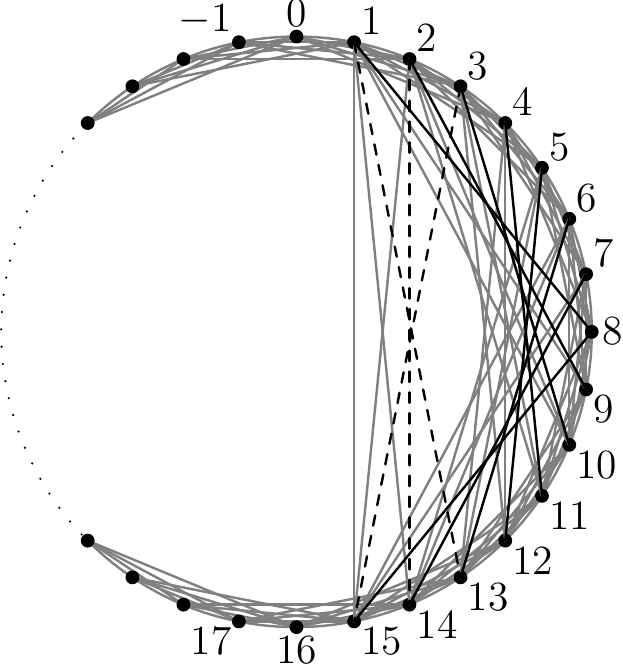} & \includegraphics[scale=0.7]{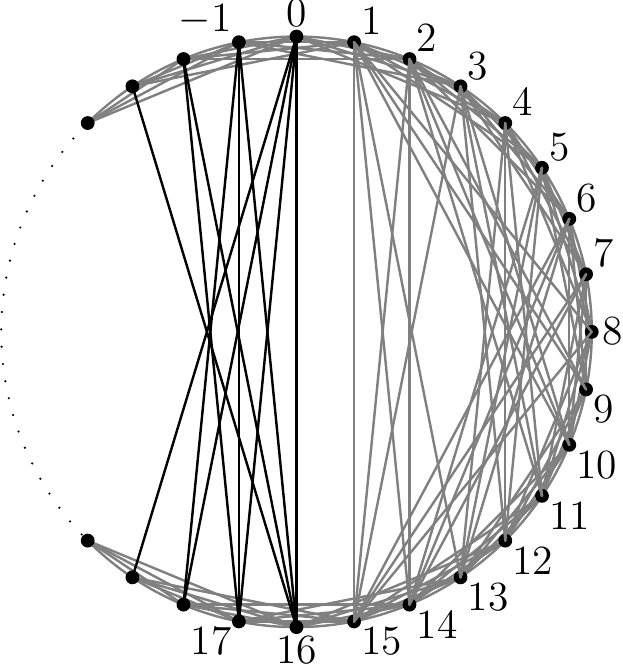}\\
c) & d) 
\end{tabular}
\caption{Construction of $\tc$ for $r=4$. Figures a),b),c) highlight edges added in stages $s=1,2,3$, where the edges of the first group are solid and those of the second group are dashed. Figure d) highlights the edges of the second phase.}
\label{fig:stages}
\end{figure}

\begin{figure}
\begin{tabular}{ccc}
\includegraphics[scale=0.7]{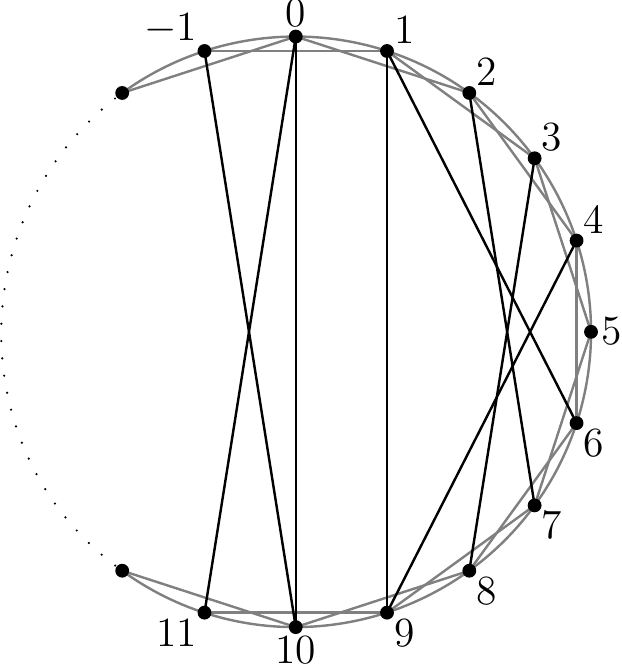} & \includegraphics[scale=0.7]{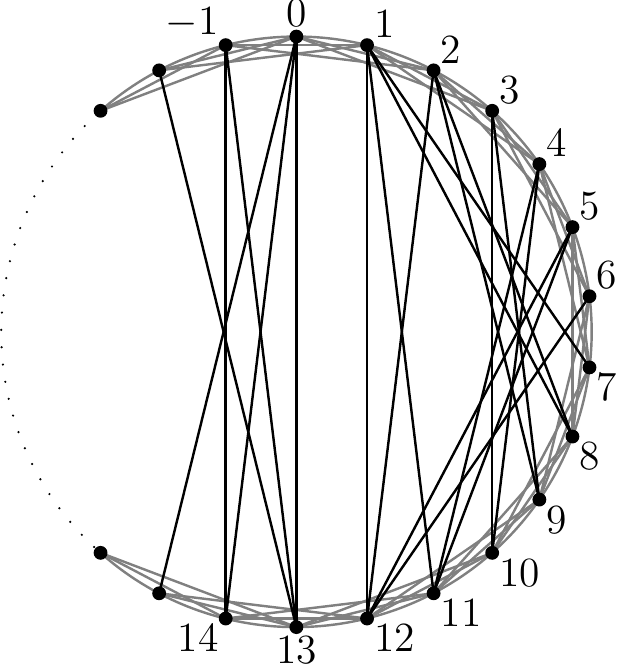} 
\includegraphics[scale=0.7]{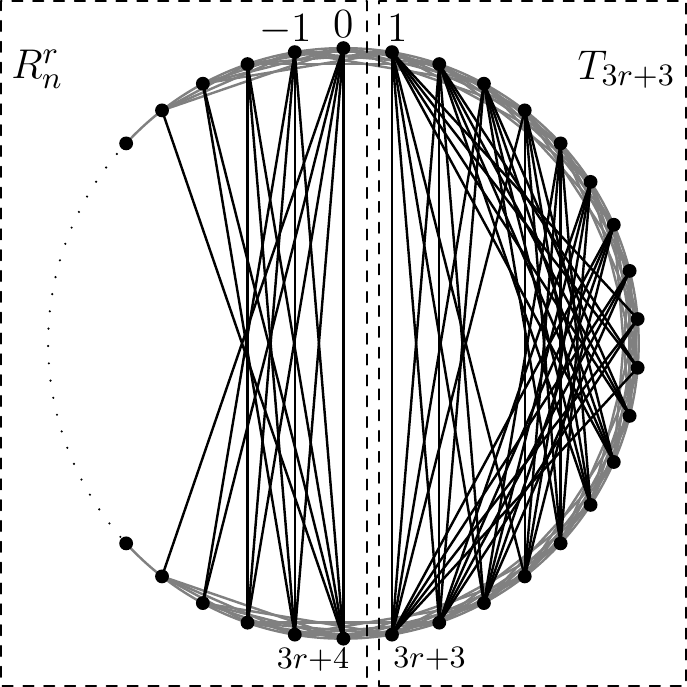}\\
\end{tabular}
\caption{The graphs $\tc$ for $r=2,3$ and a general decomposition shown for $r=5$.}
\label{fig:general}
\end{figure}

Let $\oth$ denote the subgraph of $\tc$ induced by the vertices $\{1,\ldots,3r+3\}$ (this subgraph does not depend on $n$, see Fig.\ref{fig:general}). Also, let $R_n^r$ be the remaining part of $\tc$ i.e. the subgraph induced by $\{3r+4,\ldots,-1,0\}$. Note that all the edges added in the first phase of the construction belong to $\oth$, all the edges from the second phase are in $R_n^r$ and the only edges between the two parts are those that were originally in $C_n^r$. The condition $n\geq 5r+4$ of Theorem \ref{thm:bigsplitting} guarantees that all the edges added in the construction (esp. in the second phase) are indeed ``new''.

We start with some technical properties of $\tc$ and $\oth$ which ultimately lead to the fact that $\ind(\tc)$ is a homotopical model for $\susp^2\,\ind(C_{n-(3r+3)}^r)$.

\begin{lemma}
\label{technical}
The graphs $\tc$ and $\oth$ have the following properties:
\begin{itemize}
\item[a)] The graphs $\oth$ and $\tc$ have an axis of symmetry, in the sense that there is an edge $(i,j)$ if and only if there is an edge $(3r+4-i,3r+4-j)$.
\item[b)] For any $1\leq i\leq r$ the graph $\oth\setminus N[i]$ is isomorphic to the path $P_4$ on the vertices $i+r+1$, $i+r+2$, $i+2r+2$, $i+2r+3$.
\item[c)] If $i,j$ are two vertices of $\oth$ with $1\leq j-i\leq 2r+1$ then $(i,j)$ is not an edge of $\oth$ if and only if $j=i+r+1$ or $j=i+r+2$.
\item[d)] For any $0\leq k\leq r+1$ we have $\ind(\oth[k+1,\ldots,k+2r+2])\htpyequiv\ast$.
\item[e)] There is a homotopy equivalence $\ind(\oth)\htpyequiv S^1$.
\item[f)] The graphs $R_n^r$ and $C_{n-(3r+3)}^r$ are isomorphic.
\end{itemize}
\end{lemma}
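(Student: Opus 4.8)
The plan is to extract one combinatorial fact and let parts (a)--(d) follow from it: \emph{for $1\le i<j\le 3r+3$, the pair $(i,j)$ is an edge of $\oth$ if and only if $j-i\notin\{r+1,r+2,2r+2,2r+3\}$.} To prove this I would track the construction. The induced subgraph $C_n^r[\{1,\dots,3r+3\}]$ contributes exactly the pairs at distance $1,\dots,r$ (for $n\ge 5r+4$ the cyclic distance of two such vertices agrees, in the relevant range, with the absolute difference). For the first phase one checks by inspection that in stage $s$ the first group adds \emph{all} pairs at distance $2r-s+2$ lying inside $\{1,\dots,3r+3\}$ and the second group adds all such pairs at distance $3r-s+3$; as $s$ runs through $1,\dots,r-1$ these distances run through $\{r+3,\dots,2r+1\}$ and $\{2r+4,\dots,3r+2\}$ respectively, so that after the first phase the distances present are exactly $\{1,\dots,r\}\cup\{r+3,\dots,2r+1\}\cup\{2r+4,\dots,3r+2\}$, i.e.\ everything except $r+1,r+2,2r+2,2r+3$. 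Part (c) is then just the restriction of this to $j-i\le 2r+1$. For (a), the reflection $\phi\colon i\mapsto 3r+4-i$ preserves absolute differences and permutes $\{1,\dots,3r+3\}$, hence fixes $\oth$ by the characterization; for $\tc$ one adds that the $C_n^r$-edges are preserved by every reflection of $\zet/n$ and that $\phi$ carries the second-phase edge $(-x,3r+3+y)$ to $(-(y-1),3r+3+(x+1))$, whose parameters $x'=y-1,\ y'=x+1$ again satisfy $0\le x'\le r-1$, $1\le y'\le r$, $x'+y'=x+y\le r$. For (b), when $1\le i\le r$ all vertices at a forbidden distance from $i$ lie above $i$ (these distances exceed $r-1\ge i-1$) and stay $\le 3r+3$, so $\oth\setminus N[i]=\{i+r+1,i+r+2,i+2r+2,i+2r+3\}$, and the differences among these four vertices ($1,r,r+1,r+2$) are exactly those of the stated $P_4$.

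For (d), the window $W=\{k+1,\dots,k+2r+2\}$ has only $2r+2$ vertices, so the only forbidden distances occurring inside it are $r+1$ and $r+2$. Hence $\ind(\oth[W])$ is the clique complex of the complement $\overline{G}$ of $\oth[W]$, whose edges join the pairs at distance $r+1$ or $r+2$. Two such distances already sum to more than $2r+1$, so $\overline{G}$ is triangle-free and its clique complex is $\overline{G}$ itself as a $1$-complex. Finally $\overline{G}$ is a path on $2r+2$ vertices: the distance-$(r+1)$ edges form a perfect matching between $\{k+1,\dots,k+r+1\}$ and $\{k+r+2,\dots,k+2r+2\}$, the distance-$(r+2)$ edges form that matching shifted by one step and with one edge missing, and threading the two matchings alternately strings all $2r+2$ vertices into a single path. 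Paths are contractible, so (d) holds.

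For (e) I would first shrink $\oth$ using (b) and the first part of Proposition~\ref{prop:les-v}: delete the vertices $1,2,\dots,r$ one at a time in this order. When vertex $i$ is deleted, all previously deleted vertices $1,\dots,i-1$ lie in $N[i]$ (they are within distance $r-1$ of $i$), so the relevant link is unchanged and is the $P_4$ of (b); since $\ind(P_4)$ is contractible each deletion is a homotopy equivalence. This identifies $\ind(\oth)$ with $\ind(T')$ for $T'=\oth[\{r+1,\dots,3r+3\}]$. I then apply Proposition~\ref{prop:les-v} to $T'$ with $v=3r+3$: the deletion $T'\setminus(3r+3)=\oth[\{r+1,\dots,3r+2\}]$ is a size-$2r+2$ window, so $\ind(T'\setminus(3r+3))$ is contractible by (d); hence the inclusion $\ind(T'\setminus N[3r+3])\incl\ind(T'\setminus(3r+3))$ is null-homotopic, and the second part of that proposition gives $\ind(T')\htpyequiv\ind(T'\setminus(3r+3))\vee\susp\,\ind(T'\setminus N[3r+3])\htpyequiv\susp\,\ind(T'\setminus N[3r+3])$. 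A direct computation shows $T'\setminus N[3r+3]$ is the three-vertex path on $\{r+1,2r+1,2r+2\}$, whose independence complex is $S^{0}$ up to homotopy, so $\ind(\oth)\htpyequiv\ind(T')\htpyequiv\susp\,S^{0}=S^{1}$.

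For (f) I would lay the $n-(3r+3)$ vertices of $R_n^r$ on a cycle via $\beta\colon\zet/(n-3r-3)\to\{3r+4,\dots,n-1,0\}$, $\beta(k)=(3r+4+k)\bmod n$, the ``seam'' falling between $0$ and $3r+4$. A short case check shows $\beta$ is a graph isomorphism $C_{n-(3r+3)}^r\to R_n^r$: pairs of cyclic distance $\le r$ not crossing the seam are exactly the $C_n^r$-edges among these vertices; the pairs of cyclic distance $\le r$ that cross the seam are exactly the pairs $(-x,3r+3+y)$ with $x+y\le r$, i.e.\ the second-phase edges; and for every remaining pair the $\zet/n$-distance exceeds $r$ (here one uses $n\ge 5r+4$) while the constraint $x+y\le r$ fails, so it is a non-edge of $R_n^r$ as well; the hypothesis $n\ge 5r+4$ is exactly what keeps the two parts disjoint and the second-phase edges genuinely new. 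The only substantive step is (e); everything else is bookkeeping governed by the distance characterization. The delicate point in (e) is to stop the deletions at the right moment: one removes exactly the $r$ ``free'' vertices $1,\dots,r$, after which the remaining $2r+3$-vertex graph still contains a size-$2r+2$ window that (d) collapses, so that a single further use of the cofibre sequence has a contractible deletion term but a non-contractible ($S^{0}$) link --- and it is this last mismatch, not another free reduction, that produces the suspension and hence $S^{1}$.
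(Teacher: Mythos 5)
Your proposal is correct and takes essentially the same route as the paper: your up-front characterization (edges of $\oth$ are exactly the pairs whose difference avoids $\{r+1,r+2,2r+2,2r+3\}$) is just a unified packaging of the paper's stage-by-stage checks used for a)--c), your complement-is-a-path argument for d), the ``delete $1,\ldots,r$ then apply the cofibre sequence once'' argument for e), and the seam-gluing bookkeeping for f) all mirror the paper's proof. The only deviations are cosmetic: in e) you perform the final step at the vertex $3r+3$ instead of the paper's $r+1$ (a reflection-symmetric choice giving the same $P_3$ link and hence $\susp S^0=S^1$), and in f) you spell out the isomorphism that the paper records as obvious.
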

\begin{proof}
\textbf{a)} The statement obviously holds for the original edges of $C_n^r$. If $(i,j)=(i,i+2r-s+2)$ is an edge added in the $s$-th stage then
$$(3r+4-j,3r+4-i)=(r+s+2-i,(r+s+2-i)+2r-s+2)$$
was also added in the same stage as $1\leq r+s+2-i\leq r+s+1$. A similar argument applies to the edges of the form $(i,i+3r-s+3)$. Every edge $(-x,3r+3+y)$ of the second phase is mirrored by 
$$(3r+4-(3r+3+y),3r+4-(-x))=(-(y-1),3r+3+(x+1))$$
which was also added in the second phase.

\textbf{b)} Any vertex $i$ with $1\leq i\leq r$ is connected to
\begin{itemize}
\item all of $1,\ldots,i+r$ --- using the original edges from $C_n^r$,
\item vertices between $i+2r-1+2=i+2r+1$ and $i+2r-(r-1)+2=i+r+3$ (going backwards) --- edges added in the first groups of each stage as $i\leq r+s+1$ for all $s$,
\item vertices between $i+3r-i+3=3r+3$ and $i+3r-(r-1)+3=i+2r+4$ (going backwards) --- edges added in the second groups of each stage $s$ that satisfies $i\leq s$.
\end{itemize}
It means that the vertices of $\oth\setminus N[i]$ are exactly $\{i+r+1, i+r+2, i+2r+2, i+2r+3\}$. Moreover, any two of them with difference other than $1$ or $r$ have difference $r+1$ and $r+2$. Such pairs do not form edges because the shortest edges added in the first phase span over a distance of at least $2r-(r-1)+2=r+3$. That means $\oth\setminus N[i]$ is precisely a $P_4$.

\textbf{c)} As observed in b), there are no edges $(i,i+r+1)$ and $(i,i+r+2)$. If $j-i\leq r$ then $(i,j)$ is an edge already in $C_n^r$. Now suppose that $r+3\leq j-i\leq 2r+1$ and let $s=2r+i-j+2$. The constraints on $i,j$ are equivalent to $1\leq s\leq r-1$ and the inequality $j\leq 3r+3$ is equivalent to $i\leq r+s+1$. It means that the edge $(i,i+2r-s+2)=(i,j)$ was added in stage $s$.

\textbf{d)} We will show that the complement of the graph $\oth[k+1,\ldots,k+2r+2]$ is a path and then the result immediately follows. Part c) gives a complete description of edges in that complement. Vertices $k+r+1$ and $k+r+2$ have one incident edge each (to $k+2r+2$ and $k+1$, respectively) and every vertex $k+i$ with $1\leq i\leq r$ has edges to $k+i+r+1$ and $k+i+r+2$. This easily implies that the graph in question is the path
$$k+r+1,k+2r+2,k+r,k+2r+1,k+r-1,\ldots,k+1,k+r+2.$$

\textbf{e)} By part b) the complexes $\ind(\oth\setminus N[i])$ are contractible and contained in $\{r+1,\ldots,3r+3\}$ for all $1\leq i\leq r$. By Proposition \ref{prop:les-v}.a) we can therefore sequentially remove all those $i$ from $\oth$ without affecting the homotopy type of the independence complex. That means
$$\ind(\oth)\htpyequiv\ind(\oth[r+1,\ldots,3r+3]).$$ 
Let $H=\oth[r+1,\ldots,3r+3]$. Using part d) with $k=r+1$ we get that $\ind(H\setminus \{r+1\})$ is contractible. Moreover the graph $H\setminus N[r+1]$ is the $3$-vertex path induced by $2r+2,2r+3,3r+3$,  so $\ind(H\setminus N[r+1])\htpyequiv S^0$. The cofibration sequence of Proposition \ref{prop:les-v} now yields $\ind(H)\htpyequiv \susp S^0=S^1$.

\textbf{f)} This is obvious as the edges added in the second phase of the construction are exactly those needed to close the long power of a path $P_{n-(3r+3)}^r$ into the same power of a cycle.
\end{proof}

\begin{proposition}
\label{prop:model}
There is a homotopy equivalence $$\ind(\tc)\htpyequiv\susp^2\,\ind(C_{n-(3r+3)}^r).$$
\end{proposition}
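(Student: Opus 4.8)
The plan is to turn $\tc$ into the disjoint union $\oth\sqcup R_n^r$ by deleting edges one at a time, arranging that every deletion preserves the homotopy type of the independence complex, and then to read off the claim from parts e) and f) of Lemma \ref{technical}.

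First I would identify which edges to delete. By the construction (as observed just before Lemma \ref{technical}) every first-phase edge lies inside $\oth$, every second-phase edge lies inside $R_n^r$, and the only edges of $\tc$ joining $V(\oth)=\{1,\ldots,3r+3\}$ to $V(R_n^r)=\{3r+4,\ldots,0\}$ are original edges of $C_n^r$: for $1\le j\le r$ the edges joining $j$ to the vertices $0,-1,\ldots,-(r-j)$ preceding it, together with the images of these under the mirror symmetry of Lemma \ref{technical}.a near the pair $3r+3,\,3r+4$. (The hypothesis $n\ge 5r+4$ guarantees there are no further crossing edges.) Call these the \emph{bridge} edges; deleting all of them from $\tc$ produces exactly $\oth\sqcup R_n^r$.

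Next I would show the bridges can be deleted one after another, in any order, without changing the homotopy type. Fix a bridge $e=(j,k)$ with $j\in V(\oth)$, and let $G$ be the graph obtained from $\tc$ by removing some subset of the remaining bridges. By Lemma \ref{technical}.b if $1\le j\le r$, or by its mirror image (Lemma \ref{technical}.a) if $2r+4\le j\le 3r+3$, the set $V(\oth)\setminus N_{\oth}[j]$ induces a $4$-vertex path $P_4$ in $\oth$. None of the three edges of this $P_4$ is a bridge, so it is still present in $G$; and a direct check --- comparing the ``wrap-around'' neighbours in $C_n^r$ of the four path vertices with the second-phase edges incident to $k$, the decisive inequality being exactly the condition ``$j$ and $k$ are at distance $\le r$ in $C_n$'' that makes $e$ a bridge --- shows that in $G\setminus N[e]$ this $P_4$ is a whole connected component. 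Hence $\ind(G\setminus N[e])$ is the join of the contractible complex $\ind(P_4)$ with something, so it is contractible, and Proposition \ref{prop:les-e}.a gives that the inclusion $\ind(G)\incl\ind(G- e)$ is a homotopy equivalence.

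Iterating over all bridge edges yields $\ind(\tc)\htpyequiv\ind(\oth\sqcup R_n^r)=\ind(\oth)\ast\ind(R_n^r)$; by Lemma \ref{technical}.e the first factor is homotopy equivalent to $S^1$, so this is homotopy equivalent to $S^1\ast\ind(R_n^r)$, and by Lemma \ref{technical}.f that equals $S^1\ast\ind(C_{n-(3r+3)}^r)=\susp^2\,\ind(C_{n-(3r+3)}^r)$, which is the assertion. The step I expect to cost the most work is the bookkeeping in the previous paragraph: one must make sure that after an \emph{arbitrary} collection of bridges has already been removed, the $P_4$ attached to the remaining bridge $e$ is genuinely severed from the rest of $G\setminus N[e]$ --- and this is precisely where the exact list of second-phase edges and the bound $n\ge 5r+4$ are used.
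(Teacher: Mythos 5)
Your argument is essentially the paper's own proof: you delete the bridge edges between $\oth$ and $R_n^r$ one at a time, using Proposition \ref{prop:les-e}.a together with the fact that the $P_4$ of Lemma \ref{technical}.b (or its mirror image via Lemma \ref{technical}.a) splits off as a component of $G\setminus N[e]$, and then conclude from Lemma \ref{technical}.e and f. The only difference is one of detail: the paper makes the ``severing'' check explicit with the single inequality $(3r+4+y)-(x+2r+3)=r+1+(y-x)\geq r+1$ for a bridge $e=(x,-(r-y))$ with $x\leq y$, which is exactly the computation you correctly locate but leave to be carried out.
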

\begin{proof}
We will show that all edges that connect $\oth$ with $R_n^r$ can be removed without changing the homotopy type of the independence complex, i.e. that the inclusion
$$\ind(\tc)\incl\ind(\oth\sqcup R_n^r)$$
is a homotopy equivalence. Then the result follows from e) and f) of Lemma \ref{technical}.

\begin{figure}
\includegraphics[scale=0.7]{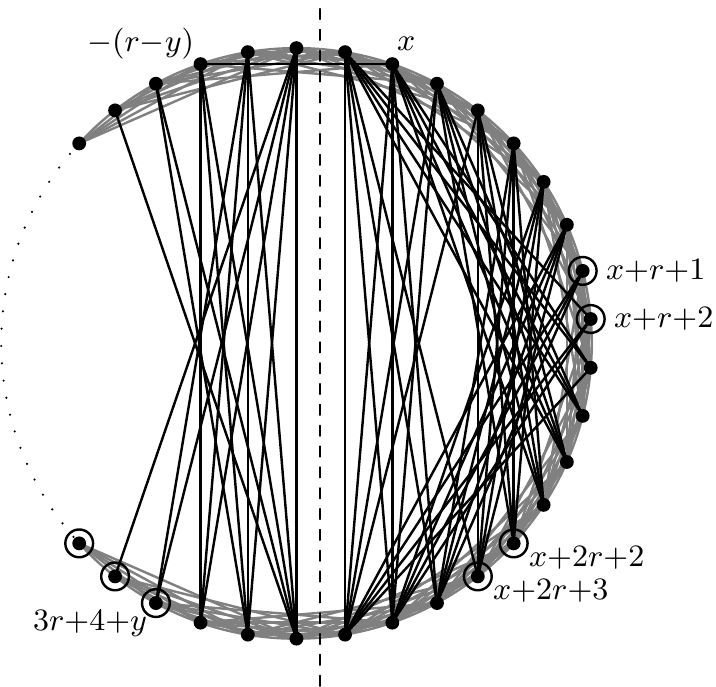} 
\caption{The proof of Proposition \ref{prop:model}. The circled vertices are those that remain after removing $N[e]$ for $e=(x,-(r-y))$.}
\label{fig:arg-x}
\end{figure}

Because of the symmetry of Lemma \ref{technical}.a) it suffices to consider the removal of edges of $C_n^r$ which ``go across $0$''. Every such edge is of the form $e=(x,-(r-y))$ for $1\le x\leq y\leq r$ (see Fig.\ref{fig:arg-x}). By Proposition \ref{prop:les-e}.a) all we need to check is that the complex $\ind(\tc\setminus N[e])$ is contractible. (To be precise, we need to know this not for $\tc$ but for the intermediate graph we obtain after some edges of this form have already been removed. It is, however, easy to see that it will be exactly the same thing.) 

By Lemma \ref{technical}.b) the removal of $N[x]$ deletes all vertices in $\{1,\ldots,3r+3\}$ except $x+r+1$, $x+r+2$, $x+2r+2$, $x+2r+3$. The removal of $N[-(r-y)]$ deletes (in particular) all of $0,\ldots,-r$ and $3r+4,\ldots,3r+3+y$. The first vertex in $R_n^r$ which remains is $3r+4+y$ and
$$(3r+4+y)-(x+2r+3)=r+1+(y-x)\geq r+1$$
so it is too far to be adjacent to the vertices which remain inside $\oth$. It follows that $\tc\setminus N[e]$ is a disjoint union of $P_4$ and some subgraph of $R_n^r$, hence its independence complex is contractible. This is what we needed to prove.
\end{proof}

We can now move on to the second part of the program outlined at the beginning of this section. This means proving:
\begin{proposition}
\label{prop:edgebyedge}
The sequence of edges listed in the construction of $\tc$ from $C_n^r$ satisfies the assumptions of Lemma \ref{lem:inductive}.
\end{proposition}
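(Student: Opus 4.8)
The plan is to verify the hypothesis of Lemma~\ref{lem:inductive} one edge at a time. For an edge $e=(u,v)$ in the list, with $G_i$ the graph obtained after $e$ has been inserted (so $G_i- e=G_{i-1}$), we must see that the inclusion $\ind(e\sqcup(G_i\setminus N[e]))\incl\ind(G_i)$ is null-homotopic. Most of the time this will be done by exhibiting, inside the partially built graph $G_i$, a $4$-vertex path $x-u-v-y$ with $x,y\in N_{G_i}[e]$ and $N_{G_i}[x]\cup N_{G_i}[y]\subset N_{G_i}[e]$, so that Theorem~\ref{p4} yields the conclusion; in a few boundary cases it is instead cleaner to observe directly that $\ind(G_i\setminus N[e])$ is contractible, which makes the inclusion null-homotopic trivially.

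The verification is a case analysis according to the phase, group and index of $e$; inside the first phase the mirror symmetry of Lemma~\ref{technical}.a) lets us restrict attention to indices $i$ lying in the lower half of their range. For an edge $(i,i+d)$ of a first group in stage $s$ (so $r+3\le d\le 2r+1$) one looks for the path $x-u-v-y$ using short edges of $C_n^r$ incident to the endpoints, e.g.\ $x=i+1$ and $y=i+d-1$, modifying the choice for extreme $i$. Lemma~\ref{technical}.c) says precisely which pairs of vertices of $\oth$ fail to be adjacent, and the key structural point is that the span $d$ of a stage-$s$ edge is strictly less than the span of any edge inserted in an earlier stage, and lies strictly between the range of $C_n^r$-distances and the range of the second-group spans; this is exactly what is needed to see that no edge inserted before $e$ joins two of the four vertices $x,y,u,v$ (so they still span an induced $P_4$ in $G_i$) and that no such edge has pushed a neighbour of $x$ or of $y$ out of $N_{G_i}[e]$. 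The second-group edges are treated in the same way, the path now being allowed to use edges from earlier stages and from the first group of the current stage.

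The delicate case is the second phase, where by that point $G_i$ already contains all of $\oth$. For $e=(-x,3r+3+y)$ one first checks that $G_i\setminus N[e]$ splits as a disjoint union of a power-of-a-path graph carried by $R_n^r$ and of the interval $\oth[r-x+1,\ldots,2r+2+y]$, whose length is $r+2+x+y\le 2r+2$. When $x+y=r$ this interval is one of the contractible subgraphs of Lemma~\ref{technical}.d), so $\ind(G_i\setminus N[e])$ is contractible and the inclusion is null-homotopic for free (such edges will contribute only trivial summands to $X_{n,r}$). When $x+y<r$ the interval is too short to be covered by Lemma~\ref{technical}.d), and the three criteria of Section~\ref{sect:first} no longer apply literally; here one must establish the null-homotopy directly, factoring the inclusion through $\ind$ of a suitable intermediate induced subgraph of $G_i$ and analysing it with the vertex-deletion reductions of the type used to prove Lemma~\ref{technical}.e). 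I expect this last subcase to be the real obstacle of the argument; everywhere else the only difficulty is the bookkeeping of which edges are present in $G_i$, and that stays under control precisely because the spans of the inserted edges were designed to occupy narrow, essentially disjoint bands --- first groups in $[r+3,2r+1]$, second groups in $[2r+4,3r+2]$, the second phase beyond $3r+3$.
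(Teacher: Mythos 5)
Your overall plan coincides with the paper's: feed the edges one at a time into Lemma \ref{lem:inductive}, use Theorem \ref{p4} with $x=u+1$, $y=v-1$ for the first-phase edges, and handle the second phase by a direct null-homotopy argument. But as written there are two genuine gaps. First, in the first phase the substantial point is not the induced $P_4$ (your band-of-spans observation does settle that part, exactly as in the paper) but the containment $N[x]\cup N[y]\subset N[e]$, and this does \emph{not} follow from the inserted spans occupying narrow disjoint bands. A previously inserted backward edge at $x=i+1$ from an earlier stage $s'$, namely $(i-2r+s'-1,\,i+1)$, gives $x$ a neighbour $j=i-2r+s'-1$ lying well outside the $C_n^r$-neighbourhood $\{u-r,\dots,v+r\}$ of $e$; the containment only holds because this $j$ is joined to $v$ by a second-group edge that was added back in stage $s''=s'+s-r$, and one must check $1\le s''<s'$ using $j\ge 1$ and $i\le r+s+1$. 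Analogous cross-stage identities are needed for the other types of earlier edges incident to $x$ (and, for second-group edges, an edge of the first group of the \emph{current} stage is invoked). This case analysis is the core of the paper's proof and is missing from --- indeed contradicted by --- your claim that band-disjointness alone shows no earlier edge pushes a neighbour of $x$ or $y$ out of $N[e]$.

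Second, you explicitly leave open the second-phase edges with $x+y<r$, which are the majority; your observation that the case $x+y=r$ follows from contractibility of $\ind(G_i\setminus N[e])$ via Lemma \ref{technical}.d) is correct but marginal, since those wedge summands are trivial anyway. The paper closes the remaining case by factoring the inclusion $\ind(e\sqcup(G\setminus N[e]))\incl\ind(G)$ through $\ind(G[V\cup W])$ with $V=\{-x\}\cup\{1,\dots,-x+3r+2\}\cup\{3r+3+y\}$ and $W$ the far arc of $R_n^r$, noting there are no edges between $V$ and $W$, and then proving $\ind(G[V])\htpyequiv\ast$: remove the vertex $-x$ by Proposition \ref{prop:les-v}.a) (since $G[V]\setminus N[-x]$ is an interval of $\oth$ covered by Lemma \ref{technical}.d)), observe that the remaining neighbourhood of $3r+3+y$ is a clique, and apply Corollary \ref{clique-neib}, each resulting summand being contractible by Lemma \ref{technical}.b) together with the symmetry of Lemma \ref{technical}.a). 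Without this (or an equivalent) argument the proposition is not proved; what you have is a correct plan plus the easier half of the verification.
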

\begin{proof}
We start from $G=C_n^r$ and expand it edge by edge.

\textbf{Edges of the first phase.} Suppose we are now in stage $s$, $1\leq s\leq r-1$.

\textbf{$i$-th edge of first group.} Suppose our current graph $G$ includes all the edges up to the edge $e=(u,v)=(i,i+2r-s+2)$ of the first group in the $s$-th stage. We are going to use Theorem \ref{p4} with $x=i+1$, $y=i+2r-s+1$, see Fig.\ref{fig:arg}.a). The graph induced by $\{x,y,u,v\}$ has edges $e=uv$, $ux$ and $vy$ and no others because the differences between remaining pairs of vertices are at least $2r-s\geq r+1$ and less than $2r-s+2$, so those edges may potentially only be added in the first groups of future stages. It means that the induced graph is a $P_4$. It remains to check that $N[x]\cup N[y]\subset N[e]$.

Note that $2r-s+2\leq 2r+1$ so the whole interval $\{u-r,\ldots, v+r\}$ is in $N[e]=N[u]\cup N[v]$ already in the graph $C_n^r$. It means that all the neighbours of $x$ or $y$ in $C_n^r$ belong to $N[e]$. It remains to concentrate on the new adjacencies induced by the edges added previously in the construction. Consider first the vertex $x=i+1$. It can have the following, previously added edges.
\begin{itemize}
\item $(x,j)=(i+1,(i+1)+2r-s'+2)=(i+1,i+2r-s'+3)$ for some $1\leq s'\leq s$. Then $j-v=s-s'+1<r$, so $j\in N[v]\subset N[e]$.
\item $(j,x)=((i+1)-(2r-s'+2),i+1)=(i-2r+s'-1,i+1)$ for some $1\leq s'\leq s$ (see Fig.\ref{fig:arg}.a)). Let $s''=s'+s-r$. The inequality $j\geq 1$ is equivalent to $2r-s'+2\leq i$. Together with the inequality $i\leq r+s+1$, which holds because we are currently in stage $s$, they yield $s''=s+s'-r\geq 1$. Clearly $s''<s'$ so $s''$ is a valid number of a past stage. We also have
$$j=i-2r+s'-1\leq r+s+1-2r+s'-1=s'+s-r=s''$$
which means that in stage $s''$ we added an edge of the second group
$$(j,j+3r-s''+3)=(j,i-2r+s'-1+3r-(s'+s-r)+3)=(j,i+2r-s+2)=(j,v)$$
so $j\in N[v]\subset N[e]$ at the present stage, as required.
\item $(x,j)=(i+1,(i+1)+3r-s'+3)$ for some $1\leq s'<s$. If that edge was added in stage $s'$, we must have had $i+1\leq s'$. Let $s''=s'-1$. Then
$1\leq i\leq s''<s\leq  r-1$ so stage $s''$ existed and in that stage we added the edge
$$(i,i+3r-s''+3)=(i,i+3r-s'+1+3)=(i,j)$$
so $j\in N[u]\subset N[e]$.
\item $(j,x)=((i+1)-(3r-s'+3),i+1)=(i-3r+s'-2,i+1)$ for some $1\leq s'<s$. We must have $j\geq 1$ and $i\leq r+s+1$, so
$$1\leq i-3r+s'-2\leq r+s+1-3r+s'-2=s+s'-2r-1<0,$$
which is a contradiction.
\end{itemize}
This completes the proof that $N[x]\subset N[e]$. Note that in this proof we only used the existence of edges from previous stages and never needed to refer to the edges added earlier in the same $s$-th stage. The part of the graph constructed up to the complete $(s-1)$ stages has the axis of symmetry of Lemma \ref{technical}.a), therefore the same proof will work to show $N[y]\subset N[e]$. It means that $N[x]\cup N[y]\subset N[e]$ and the assumptions of Theorem \ref{p4} are satisfied.

\textbf{$i$-th edge of the second group.} Now suppose we are adding the edge $e=(u,v)=(i,i+3r-s+3)$ in the second group of stage $s$, and all the previous edges are already in the graph. We are going to use Theorem \ref{p4} with $x=i+1$, $y=i+3r-s+2$, see Fig.\ref{fig:arg}.b). The graph induced by $\{x,y,u,v\}$ has edges $e=uv$, $ux$ and $vy$. There are no other edges because the remaining differences are smaller than the one between $u$ and $v$, but at least $3r-s+1\geq 2r+2$, so those edges may potentially only be added in the second groups of future stages. It means that the induced graph is a $P_4$. As before, to check  $N[x]\subset N[e]$ we only need to restrict to those edges from $x$ whose endpoints are not obviously covered by the neighbours of $u$ and $v$ from $C_n^r$. Those include:
\begin{itemize}
\item $(x,j)=(i+1,(i+1)+r)$, see Fig.\ref{fig:arg}.b). Note that by construction we must have $i\leq s$ therefore $j\leq r+s+1$, so in the first group of the present stage we added the edge
$$(j,j+2r-s+2)=(j,i+1+r+2r-s+2)=(j,i+3r-s+3)=(j,v)$$
so $j\in N[v]\subset N[e]$.
\item $(x,j)=(i+1,(i+1)+3r-s'+3)$ for some $1\leq s'\leq s$. If that edge was added in stage $s'$, we must have had $i+1\leq s'$. Let $s''=s'-1$. Then
$1\leq i\leq s''<s\leq  r-1$ so stage $s''$ existed and in that stage we added the edge
$$(i,i+3r-s''+3)=(i,i+3r-s'+1+3)=(i,j)$$
so $j\in N[u]\subset N[e]$.
\item $(x,j)=(i+1,(i+1)+2r-s'+2)=(i+1,i+2r-s'+3)$ for some $1\leq s'\leq s$. But then
$$v-j=(i+3r-s+3)-(i+2r-s'+3)=r-(s-s')\leq r$$
so $j\in N[v]$ already in $C_n^r$.
\end{itemize}
It proves that $N[x]\subset N[e]$ and $N[y]\subset N[e]$ follows from symmetry as before. Again, we invoke Theorem \ref{p4} to verify the assumption in Lemma \ref{lem:inductive}.

\begin{figure}
\begin{tabular}{ccc}
\includegraphics[scale=0.7]{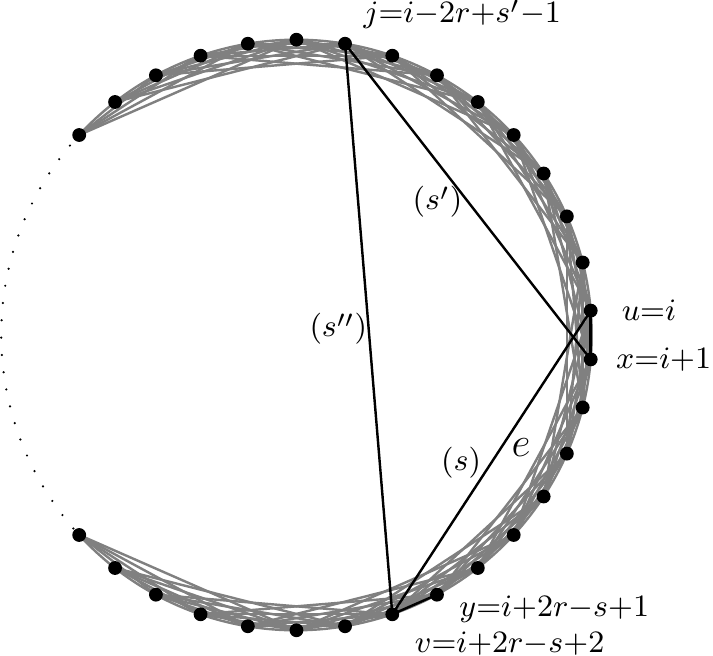} & \includegraphics[scale=0.7]{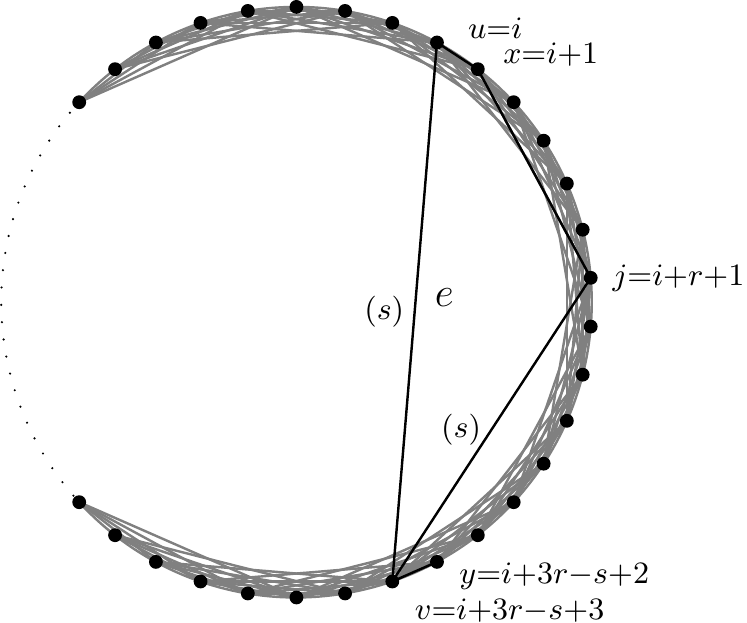} &
\includegraphics[scale=0.7]{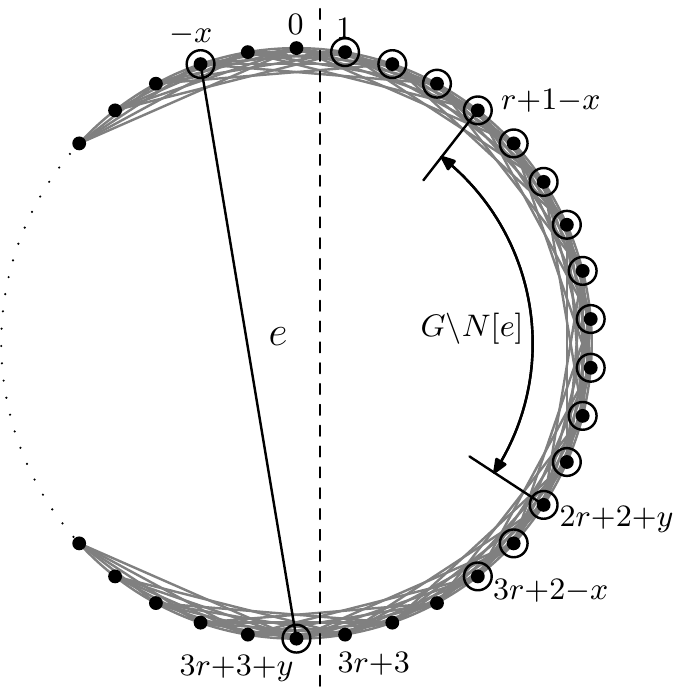}\\
a) & b) & c)
\end{tabular}
\caption{Illustration to some arguments in Proposition \ref{prop:edgebyedge}. The labels $(s)$ indicate in which stage the edge was added. In c) the circled vertices are those of the set $V$.}
\label{fig:arg}
\end{figure}

\textbf{Second phase.} Assuming that the first phase is complete we are now going to add edges of the second phase. Here the order is irrelevant to the argument. Suppose we have already constructed some graph $G$, which includes all edges of the first phase (in particular, the whole $\oth$ is already there), and that we are now adding the edge (see Fig.\ref{fig:arg}.c))
$$e=(-x,3r+3+y),\quad\mathrm{for}\quad 0\leq x\leq r-1,\, 1\leq y\leq r,\, x+y\leq r.$$
Let 
\begin{align*}
V&=\{-x\}\cup\{1,\ldots,-x+(3r+2)\}\cup\{3r+3+y\}\\
W&=\{3r+3+y+(r+1),\ldots,-x-(r+1)\}.
\end{align*}
The inclusion $\ind(e\sqcup(G\setminus N[e]))\incl\ind(G)$, which we need to show is null-homotopic, factors through $\ind(G[V\cup W])$. Indeed, $W$ contains all the vertices of $(G\setminus N[e])\cap R_n^r$. To see that $V$ covers all of $(G\setminus N[e])\cap \oth$ note that the last vertex not in $N[3r+3+y]$ is $2r+2+y$, but
$$2r+2+y\leq 2r+2+(r-x)=-x+(3r+2)$$
so $(G\setminus N[e])\cap \oth\subset V$.

There are no edges from $V$ to $W$, so $\ind(G[V\cup W])=\ind(G[V])\ast\ind(G[W])$. We are going to show that
$$\ind(G[V])\htpyequiv\ast$$
and this gives the desired conclusion.

To analyze $G[V]$ first look at the vertex $-x$. We have
$$G[V]\setminus N[-x]=\oth[-x+r+1,\ldots,-x+3r+2]$$
and the independence complex of the last graph is contractible by Lemma \ref{technical}.d). Therefore, by Proposition \ref{prop:les-v}.a) the removal of $-x$ preserves the homotopy type:
$$\ind(G[V])\htpyequiv \ind(G[V\setminus\{-x\}]).$$
But in the graph $G[V\setminus\{-x\}]$ the neighbourhood of $3r+3+y$ is $\{2r+3+y,\ldots,3r+2-x\}$. All those vertices are between $2r+4$ an $3r+2$, so they form a clique already in $C_n^r$. By Corollary \ref{clique-neib}
\begin{align*}
\ind(G[V\setminus\{-x\}])&\htpyequiv \bigvee_{i=2r+3+y}^{3r+2-x}\susp\,\ind(G[V\setminus \{-x\}]\setminus N[i])\\
&\htpyequiv \bigvee_{i=2r+3+y}^{3r+2-x}\susp\,\ind(\oth\setminus N[i])
\end{align*}
In the last wedge sum $i\geq 2r+4$, so each summand is contractible by Lemma \ref{technical}.b) combined with the symmetry of Lemma \ref{technical}.a). That ends the proof.
\end{proof}

So far we proved that the splitting of Theorem \ref{thm:bigsplitting} holds for some space $X_{n,r}$. Lemma \ref{lem:inductive} also provides a description of $X_{n,r}$ as a wedge sum of $\susp^2\,\ind(G_i\setminus N[e_i])$ where $e_i$ runs through the edges added in the construction of $\tc$. We will briefly sketch how to identify those summands and this will complete the proof of Theorem \ref{thm:bigsplitting}.
\begin{itemize}
\item \emph{First groups in first phase.} For each stage $s$ if $e=(i,i+2r-s+2)$ then:
\begin{itemize}
\item For every $1\leq i\leq s-1$ the removal of $N[e]$ leaves only the vertex $v=i+3r-s+3$ and a segment isomorphic to $P_{n-4r+i-4}^r$ within $R_n^r$. The vertex $v$ is adjacent to the $r-s+i$ initial vertices of the path power. They form a clique so Corollary \ref{clique-neib} identifies $\susp^2\,\ind(G\setminus N[e])$ as 
$$\susp^3\,\ind(P_{n-5r+i-5}^r)\vee\cdots\vee\susp^3\,\ind(P_{n-6r+s-4}^r)$$
and this is the contribution of each pair $(i,s)$ with $1\leq i<s\leq r-1$. 
\item For every $r+3\leq i\leq r+s+1$ the situation is symmetric, so we can just include the contribution of the previous part twice.
\item When $s\leq i\leq r+2$ then the vertices left after removing $N[e]$ form a $P_{n-4r+s-3}^r$. For every $s$ there are $r+3-s$ suitable values of $i$, so the total contribution of this part for every $s$ is
$$\bigvee^{r+3-s}\susp^2\,\ind(P_{n-4r+s-3}^r).$$
This can be expanded into third suspensions using \eqref{engstrom-path}.
\end{itemize}
\item \emph{Second groups in first phase.} For each stage $s$ if $e=(i,i+3r-s+3)$ then the removal of $N[e]$ leaves a disjoint union of $P_{n-5r+s-4}^r$ with a clique of size $r-s$ induced by $\{i+r+2,\ldots,i+2r-s+1\}$. There are $s$ edges in this group, so here stage $s$ contributes
$$\bigvee^{s(r-s-1)}\susp^3\,\ind(P_{n-5r+s-4}^r)$$
(in particular when $s=r-1$ the clique has size $1$ and the summand is contractible) .
\item \emph{Second phase.} For an edge $e=(-x,3r+3+y)$ its removal leaves a copy of $P_{n-5r-4-(x+y)}^r$ and a segment of $\oth$ induced by $\{r+1-x,\ldots,2r+2+y\}$. The independence complex of the last piece equals
$$\ind(\oth[y+1,\ldots,y+2r+2])\setminus\{y+1,\ldots,y+(r-x-y)\}.$$
In the proof of Lemma \ref{technical}.d) we saw that $\ind(\oth[y+1,\ldots,y+2r+2])$ is homeomorphic to a path. The order of the vertices of that path implies that the removal of each of $y+1,\ldots,y+(r-x-y)$ increases the number of connected components by $1$. Therefore the resulting space is homotopy equivalent to the wedge of $r-(x+y)$ copies of $S^0$. Since the possible values of $x+y$ are $t=1,\ldots,r$ and value $t$ is attained $t$ times we get that the total contribution of the second phase is
$$\bigvee_{t=1}^r\bigvee^{t(r-t)}\susp^3\,\ind(P_{n-5r-t-4}^r)$$
(again, the summands for $r=t$ are trivial). 
\end{itemize}

A tedious calculation, which will be omitted, allows to express the combination of all the contributions in the following form.

\begin{corollary}
\label{finally}
The space $X_{n,r}$ of Theorem \ref{thm:bigsplitting} satisfies
$$X_{n,r}\htpyequiv\susp^3\,\bigvee_{i=4r+6}^{6r+3}\bigvee^{k_i}\ind(P_{n-i}^r)$$
where 
\begin{equation*}
k_i=\begin{cases}
\frac{1}{2}(i-4r-5)(i-2r-2) & \mathrm{for}\ i\leq 5r+4,\\
\frac{1}{2}(6r+4-i)(i-2r-1) & \mathrm{for}\ i\geq 5r+5.
\end{cases}
\end{equation*}
\end{corollary}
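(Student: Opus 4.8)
The statement is purely bookkeeping: it asserts that when one collects all the wedge summands $\susp^2\,\ind(G_i\setminus N[e_i])$ produced by Lemma \ref{lem:inductive}, re-expands each one into third suspensions of $\ind(P_{n-a}^r)$'s (using Corollary \ref{clique-neib} for the ``clique'' pieces and the Engstr\"om recursion \eqref{engstrom-path} for the plain $\susp^2\,\ind(P^r_\bullet)$ pieces), and then counts multiplicities grouped by the shift $a=n-i$, the generating multiplicities are exactly the piecewise-quadratic coefficients $k_i$. So the plan is simply to carry out that collection and counting carefully. First I would fix notation for a ``summand of type $\susp^3\,\ind(P^r_{n-a})$'' and, going through the four bulleted contributions listed just above the corollary, write each contribution as a sum $\sum_a m_a\,\susp^3\,\ind(P^r_{n-a})$ with $m_a$ an explicit non-negative integer depending on $r$. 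For the three contributions that already come out as third suspensions (the symmetric halves of the first groups, the second groups, and the second phase) this is direct: one reads off the ranges of $a$ and the multiplicities $(r-s+i)$-many, $s(r-s-1)$-many, $t(r-t)$-many respectively. For the one remaining contribution, the ``middle'' range $s\le i\le r+2$ of the first groups, one has $\bigvee^{r+3-s}\susp^2\,\ind(P^r_{n-4r+s-3})$, and I would substitute the Engstr\"om relation \eqref{engstrom-path}, which turns a single $\susp^2\,\ind(P^r_m)$ into $\susp^3\bigl(\ind(P^r_{m-(r+2)})\vee\cdots\vee\ind(P^r_{m-(2r+1)})\bigr)$; after this every piece is of the desired form.

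Next I would add the four families of multiplicities together, as a function of $a$ (equivalently of $i=n-a$), being careful about the symmetry factor (the first-group side contributions are counted twice, for $1\le i\le s-1$ and for $r+3\le i\le r+s+1$, as noted in the text). Summing over the stage parameter $s$ (with $1\le s\le r-1$) and the internal index $i$, and over $t=1,\dots,r$ in the second phase, each term becomes a sum of the shape $\sum_{s}(\text{polynomial in }s)$ restricted to a range of $s$ that itself depends on $a$ — i.e. the coefficient $k_i$ is a sum of a fixed quadratic in $s$ over an interval of integers, hence a piecewise quadratic in $i$. The bulk of the work is evaluating these sums and verifying the two closed forms $\frac12(i-4r-5)(i-2r-2)$ for $i\le 5r+4$ and $\frac12(6r+4-i)(i-2r-1)$ for $i\ge 5r+5$; I would check the endpoints $i=4r+6$, $i=5r+4$, $i=5r+5$, $i=6r+3$ separately (in particular confirming that $k_{4r+6}=k_{6r+3}=0$ and that the two branches agree in value and, as they should, exhibit the $i\leftrightarrow 6r+4-i$ type symmetry reflecting Lemma \ref{technical}.a)), and confirm that no summand of a genuinely different shift $a$ is accidentally omitted, by matching the global range $4r+6\le a\le 6r+3$ promised in Theorem \ref{thm:bigsplitting}. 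The fact that all four families, after the Engstr\"om expansion, land in exactly this common range is itself a small consistency check worth recording.

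The only real obstacle here is arithmetic stamina rather than any conceptual difficulty: the hard part is keeping the index substitutions straight across the Engstr\"om re-expansion (where a contribution indexed by $(s,i)$ with shift $4r-s+3$ spreads over $r$ consecutive new shifts $4r-s+3+r+2,\dots,4r-s+3+2r+1 = 5r-s+5,\dots,6r-s+4$) and then collating everything without double- or under-counting the symmetric halves and the degenerate cases (the pieces that are contractible, e.g. $s=r-1$ in the second groups or $t=r$ in the second phase, must be silently dropped, which is consistent since the formulas give $k_i$-values that already account for this). Given the earlier remark in the paper that this is ``a tedious calculation, which will be omitted'', I would present the proof at the level of: set up the four multiplicity functions, apply \eqref{engstrom-path} to the one that needs it, state the resulting total as an explicit finite sum, and then assert — leaving the elementary summation to the reader or an appendix — that it evaluates to the claimed $k_i$, after checking the boundary values by hand.
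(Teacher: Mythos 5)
Your plan is essentially the paper's own: Lemma \ref{lem:inductive} presents $X_{n,r}$ as the wedge of the pieces $\susp^2\,\ind(G_i\setminus N[e_i])$, these are identified in the itemized list preceding the corollary, the one family that is not already a triple suspension (the middle range $s\le i\le r+2$ of the first groups) is expanded via \eqref{engstrom-path}, and the multiplicities are collected by the shift $a=n-i$; the paper then omits the resulting summation exactly as you propose to, so the method and the index bookkeeping (including the doubling of the symmetric side contributions and the vanishing of the degenerate $s=r-1$ and $t=r$ terms) are fine.

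However, three of the ``sanity checks'' you intend to run are based on false expectations, and if you trusted them you would reject a correct computation. The endpoint multiplicities are not zero: at $i=4r+6$ the only contributions are $4$ copies from the Engstr\"om expansion of the middle-range term of stage $s=r-1$ and $r-2$ copies from the second-group edge of stage $s=r-2$, giving $k_{4r+6}=r+2$, in agreement with the stated formula; similarly $k_{6r+3}=(r+2)+(r-1)=2r+1$ (for $r=2$ these are exactly the multiplicities $4$ and $5$ quoted in the introduction). Likewise the two quadratic branches do not agree near the seam (branch one gives $\frac{1}{2}(r-1)(3r+2)$ at $i=5r+4$, branch two gives $\frac{1}{2}(r-1)(3r+4)$ at $i=5r+5$), and there is no mirror symmetry of the $k_i$: the involution $i\mapsto 6r+4-i$ does not even preserve the range $[4r+6,6r+3]$, and reflection about its midpoint is not a symmetry either, because the low-shift tail comes from the second groups (multiplicity $s(r-s-1)$ at shift $5r+4-s$) while the high-shift tail comes from the second phase (multiplicity $t(r-t)$ at shift $5r+4+t$), and these do not match under $s\leftrightarrow t$. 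The symmetry of Lemma \ref{technical}.a) is a symmetry of the finished graph $\tc$, not of the multiset of links $G_i\setminus N[e_i]$ arising along the edge-by-edge construction. Replace those checks by the correct endpoint values above and the rest of your outline reproduces the paper's (omitted) calculation.
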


\begin{remark}
This work provides a natural recursive relation for $\ind(C_n^r)$, but does not say anything about the ``initial conditions'', that is the case when $n<5r+4$. It is reasonable to expect that all those spaces are, up to homotopy, wedges of spheres. Other methods of computing their homotopy types were recently obtained in \cite{TestaPriv,Jojic}.
\end{remark}

%========================================================================
\subsection*{Acknowledgement} The author thanks Dmitry Kozlov, John Jones, Damiano Testa, Russ Woodroofe and the referees for their remarks. The software package \texttt{polymake} \cite{Poly} was very helpful in the formulation and verification of various conjectures about $\ind(C_n^r)$.

%========================================================================

\end{document}